\newif\ifcref\creftrue
\def\definetac{\newif\iftac}    
\def\definecref{\newif\ifcref}
\else\usepackage{amsthm}\fi
\definecolor{darkgreen}{rgb}{0,0.45,0} 
\ifcref\usepackage{cleveref,aliascnt}\fi
\let\ea\expandafter
\def\mdef#1#2{\ea\ea\ea\gdef\ea\ea\noexpand#1\ea{\ea\ensuremath\ea{#2}\xspace}}
\def\alwaysmath#1{\ea\ea\ea\global\ea\ea\ea\let\ea\ea\csname your@#1\endcsname\csname #1\endcsname
  \ea\def\csname #1\endcsname{\ensuremath{\csname your@#1\endcsname}\xspace}}
\DeclareRobustCommand\widecheck[1]{{\mathpalette\@widecheck{#1}}}
\def\@widecheck#1#2{%
    \setbox\z@\hbox{\m@th$#1#2$}%
    \setbox\tw@\hbox{\m@th$#1%
       \widehat{%
          \vrule\@width\z@\@height\ht\z@
          \vrule\@height\z@\@width\wd\z@}$}%
    \dp\tw@-\ht\z@
    \@tempdima\ht\z@ \advance\@tempdima2\ht\tw@ \divide\@tempdima\thr@@
    \setbox\tw@\hbox{%
       \raise\@tempdima\hbox{\scalebox{1}[-1]{\lower\@tempdima\box
\tw@}}}%
    {\ooalign{\box\tw@ \cr \box\z@}}}
\def\foreachletter#1#2#3{\foreachcount=#1
  \ea\loop\ea\ea\ea#3\@alph\foreachcount
  \advance\foreachcount by 1
  \ifnum\foreachcount<#2\repeat}
\def\foreachLetter#1#2#3{\foreachcount=#1
  \ea\loop\ea\ea\ea#3\@Alph\foreachcount
  \advance\foreachcount by 1
  \ifnum\foreachcount<#2\repeat}
\def\definescr#1{\ea\gdef\csname s#1\endcsname{\ensuremath{\mathscr{#1}}\xspace}}
\def\definecal#1{\ea\gdef\csname c#1\endcsname{\ensuremath{\mathcal{#1}}\xspace}}
\def\definebold#1{\ea\gdef\csname b#1\endcsname{\ensuremath{\mathbf{#1}}\xspace}}
\def\definebb#1{\ea\gdef\csname l#1\endcsname{\ensuremath{\mathbb{#1}}\xspace}}
\def\definefrak#1{\ea\gdef\csname f#1\endcsname{\ensuremath{\mathfrak{#1}}\xspace}}
\def\definebar#1{\ea\gdef\csname #1bar\endcsname{\ensuremath{\overline{#1}}\xspace}}
\def\definetil#1{\ea\gdef\csname #1til\endcsname{\ensuremath{\widetilde{#1}}\xspace}}
\def\definehat#1{\ea\gdef\csname #1hat\endcsname{\ensuremath{\widehat{#1}}\xspace}}
\def\definechk#1{\ea\gdef\csname #1chk\endcsname{\ensuremath{\widecheck{#1}}\xspace}}
\def\defineul#1{\ea\gdef\csname u#1\endcsname{\ensuremath{\underline{#1}}\xspace}}
\def\autofmt@n#1\autofmt@end{\mathrm{#1}}
\def\autofmt@b#1\autofmt@end{\mathbf{#1}}
\def\autofmt@l#1#2\autofmt@end{\mathbb{#1}\mathsf{#2}}
\def\autofmt@c#1#2\autofmt@end{\mathcal{#1}\mathit{#2}}
\def\autofmt@s#1#2\autofmt@end{\mathscr{#1}\mathit{#2}}
\def\autofmt@f#1\autofmt@end{\mathsf{#1}}
\def\autofmt@u#1\autofmt@end{\underline{\smash{\mathsf{#1}}}}
\def\autofmt@U#1\autofmt@end{\underline{\underline{\smash{\mathsf{#1}}}}}
\def\autofmt@h#1\autofmt@end{\widehat{#1}}
\def\autofmt@r#1\autofmt@end{\overline{#1}}
\def\autofmt@t#1\autofmt@end{\widetilde{#1}}
\def\autofmt@k#1\autofmt@end{\check{#1}}
\def\auto@drop#1{}
\def\autodef#1{\ea\ea\ea\@autodef\ea\ea\ea#1\ea\auto@drop\string#1\autodef@end}
\def\@autodef#1#2#3\autodef@end{%
  \ea\def\ea#1\ea{\ea\ensuremath\ea{\csname autofmt@#2\endcsname#3\autofmt@end}\xspace}}
\def\autodefs@end{blarg!}
\def\autodefs#1{\@autodefs#1\autodefs@end}
\def\@autodefs#1{\ifx#1\autodefs@end%
  \def\autodefs@next{}%
  \else%
  \def\autodefs@next{\autodef#1\@autodefs}%
  \fi\autodefs@next}
\DeclareSymbolFont{bbold}{U}{bbold}{m}{n}
\DeclareSymbolFontAlphabet{\mathbbb}{bbold}
\newcommand{\bbone}{\ensuremath{\mathbbb{1}}\xspace}
\mdef\delbar{\overline{\partial}}
\mdef\hf{\textstyle\frac12 }
\mdef\thrd{\textstyle\frac13 }
\mdef\qtr{\textstyle\frac14 }
\newcommand{\op}{^{\mathrm{op}}}
\let\adj\dashv
\newcommand{\pushoutcorner}[1][dr]{\save*!/#1+1.2pc/#1:(1,-1)@^{|-}\restore}
\newcommand{\pullbackcorner}[1][dr]{\save*!/#1-1.2pc/#1:(-1,1)@^{|-}\restore}
\mdef\Id{\mathrm{Id}}
\mdef\id{\mathrm{id}}
\def\frc#1/#2.{\frac{#1}{#2}}   
\mdef\ten{\mathrel{\otimes}}
\mdef\sqten{\mathrel{\boxtimes}}
\DeclareRobustCommand\widecheck[1]{{\mathpalette\@widecheck{#1}}}
\def\@widecheck#1#2{%
    \setbox\z@\hbox{\m@th$#1#2$}%
    \setbox\tw@\hbox{\m@th$#1%
       \widehat{%
          \vrule\@width\z@\@height\ht\z@
          \vrule\@height\z@\@width\wd\z@}$}%
    \dp\tw@-\ht\z@
    \@tempdima\ht\z@ \advance\@tempdima2\ht\tw@ \divide\@tempdima\thr@@
    \setbox\tw@\hbox{%
       \raise\@tempdima\hbox{\scalebox{1}[-1]{\lower\@tempdima\box
\tw@}}}%
    {\ooalign{\box\tw@ \cr \box\z@}}}
\DeclareMathOperator\colim{colim}
\newcommand{\too}[1][]{\ensuremath{\overset{#1}{\longrightarrow}}}
\mdef\we{\overset{\sim}{\longrightarrow}}
\mdef\leftwe{\overset{\sim}{\longleftarrow}}
\let\xto\xrightarrow
\def\rightarrowtailfill@{\arrowfill@{\Yright\joinrel\relbar}\relbar\rightarrow}
\newcommand\xrightarrowtail[2][]{\ext@arrow 0055{\rightarrowtailfill@}{#1}{#2}}
\def\twoheadrightarrowfill@{\arrowfill@{\relbar\joinrel\relbar}\relbar\twoheadrightarrow}
\newcommand\xtwoheadrightarrow[2][]{\ext@arrow 0055{\twoheadrightarrowfill@}{#1}{#2}}
\def\slashedarrowfill@#1#2#3#4#5{%
  $\m@th\thickmuskip0mu\medmuskip\thickmuskip\thinmuskip\thickmuskip
   \relax#5#1\mkern-7mu%
   \cleaders\hbox{$#5\mkern-2mu#2\mkern-2mu$}\hfill
   \mathclap{#3}\mathclap{#2}%
   \cleaders\hbox{$#5\mkern-2mu#2\mkern-2mu$}\hfill
   \mkern-7mu#4$%
}
\def\rightslashedarrowfill@{%
  \slashedarrowfill@\relbar\relbar\mapstochar\rightarrow}
\newcommand\xslashedrightarrow[2][]{%
  \ext@arrow 0055{\rightslashedarrowfill@}{#1}{#2}}
\mdef\hto{\xslashedrightarrow{}}
\mdef\htoo{\xslashedrightarrow{\quad}}
\def\toiso{\xto{\smash{\raisebox{-.5mm}{$\scriptstyle\sim$}}}}
\long\def\my@drawfill#1#2;{%
\@skipfalse
\fill[#1,draw=none] #2;
\@skiptrue
\draw[#1,fill=none] #2;
}
\newif\if@skip
\newcommand{\skipit}[1]{\if@skip\else#1\fi}
\newcommand{\drawfill}[1][]{\my@drawfill{#1}}
\newif\ifhyperref
  \let\your@state\state
  \def\state#1{\my@state#1}
  \def\my@state#1.{\gdef\currthmtype{#1}\your@state{#1.}}
  \let\your@staterm\staterm
  \def\staterm#1{\my@staterm#1}
  \def\my@staterm#1.{\gdef\currthmtype{#1}\your@staterm{#1.}}
  \let\@defthm\newtheorem
  \def\switchtotheoremrm{\let\@defthm\newtheoremrm}
  \def\defthm#1#2#3{\@defthm{#1}{#2}} 
  \let\your@section\section
  \def\section{\gdef\currthmtype{section}\your@section}
  \def\currthmtype{}
    \def\autoref#1{\ref*{label@name@#1}~\ref{#1}}
    \def\autoref#1{\ref{label@name@#1}~\ref{#1}}
    \let\old@label\label%
    \def\label#1{%
      {\let\your@currentlabel\@currentlabel%
        \edef\@currentlabel{\currthmtype}%
        \old@label{label@name@#1}}%
      \old@label{#1}}
  \let\cref\autoref
  \def\defthm#1#2#3{%
    \newaliascnt{#1}{thm}
    \newtheorem{#1}[#1]{#2}
    \aliascntresetthe{#1}
    \crefname{#1}{#2}{#3}
  }
  \let\autoref\cref  
    \def\defthm#1#2#3{
      \newtheorem{#1}{#2}[section]%
      \expandafter\def\csname #1autorefname\endcsname{#2}%
      \expandafter\let\csname c@#1\endcsname\c@thm}
    \def\defthm#1#2#3{\newtheorem{#1}[thm]{#2}} 
\let\SK@label\label\fi
    \let\old@label\label
    \let\your@thm\@thm
    \def\@thm#1#2#3{\gdef\currthmtype{#3}\your@thm{#1}{#2}{#3}}
    \def\currthmtype{}
    \def\label#1{{\let\your@currentlabel\@currentlabel\def\@currentlabel%
        {\currthmtype~\your@currentlabel}%
        \SK@label{#1@}}\old@label{#1}}
    \def\autoref#1{\ref{#1@}}
  \let\cref\autoref
\newtheorem{thm}{Theorem}[section]
  \crefname{thm}{Theorem}{Theorems}
\else\theoremstyle{definition}\fi
\else\theoremstyle{remark}\fi
  \crefname{part}{Part}{Parts}
  \crefname{figure}{Figure}{Figures}
\def\thmqedhere{\expandafter\csname\csname @currenvir\endcsname @qed\endcsname}
\renewcommand{\theenumi}{(\roman{enumi})}
\renewcommand{\labelenumi}{\theenumi}
  \let\c@equation\c@subsection
  \let\c@equation\c@thm
\numberwithin{equation}{section}
\mdef\ep{\varepsilon}
\mdef\ph{\varphi}
\tikzset{lab/.style={auto,font=\scriptsize}} 
\definecolor{fxnote}{rgb}{1.0000,0.0000,0.0000}
\colorlet{fxnotebg}{yellow}
\newcommand{\tw}{\ensuremath{\operatorname{tw}}}
\newcommand{\D}{\sD}
\newcommand{\E}{\sE}
\newcommand{\V}{\sV}
\def\cLDER{\ensuremath{\mathcal{LD}\mathit{ER}}\xspace}
\let\oldboxtimes\boxtimes
\def\boxtimes{\mathrel{\oldboxtimes}}
\newcommand{\fib}{\mathsf{fib}}
\newcommand{\cof}{\mathsf{cof}}
\def\ccsub{_{\mathrm{cc}}}
\newcommand{\pdh}{\mathsf{HOM}}
\newcommand{\ldh}{\mathsf{HOM}\ccsub}
\newcommand{\lend}{\mathsf{END}\ccsub}
\def\DTl#1#2#3#4#5#6#7{%
  \xymatrix@C=3pc{{#1} \ar[r]^-{#2} &
    {#3} \ar[r]^-{#4} &
    {#5} \ar[r]^-{#6} &
    {#7}
  }}
\newsavebox{\tvabox}
\savebox\tvabox{\hspace{1mm}\begin{tikzpicture}[>=latex',baseline={(0,-.18)}]
  \draw[->] (0,.1) -- +(1,0);
  \node at (.5,0) {$\scriptscriptstyle\bot$};
  \draw[->] (1,-.1) -- +(-1,0);
  \draw[->] (1,-.2) -- +(-1,0);
\end{tikzpicture}\hspace{1mm}}
\newcommand{\tcof}{\mathsf{tcof}}
\newcommand{\tfib}{\mathsf{tfib}}
\newcommand{\cok}{\mathrm{cok}}
\newtheorem*{thm*}{\textbf{Theorem}}
\newcounter{itemgroup}
\newcommand{\newitemgroup}{\setcounter{itemgroup}{0}\@itemgroup}
\newcommand{\itemgroup}{\addtocounter{enumi}{-1}\@itemgroup}
\newcommand{\@itemgroup}{%
  \stepcounter{itemgroup}%
  \renewcommand{\theenumi}{(\roman{enumi}.\alph{itemgroup})}%
  \renewcommand{\labelenumi}{\theenumi}%
  \item %
}
\newcommand{\stopitemgroup}{%
  \renewcommand{\theenumi}{(\roman{enumi})}%
  \renewcommand{\labelenumi}{\theenumi}%
}
\newcommand{\stabl}{\mathsf{Stab}_L}
\newcommand{\absl}{\mathsf{Abs}_L}
\newcommand{\stabr}{\mathsf{Stab}_R}
\newcommand{\absr}{\mathsf{Abs}_R}
\title{Generalized stability for abstract homotopy theories}
\author{Moritz Groth and Michael Shulman}
\thanks{This material is based on research sponsored by The United States Air Force Research Laboratory under agreement number FA9550-15-1-0053.  The U.S. Government is authorized to reproduce and distribute reprints for Governmental purposes notwithstanding any copyright notation thereon.  The views and conclusions contained herein are those of the authors and should not be interpreted as necessarily representing the official policies or endorsements, either expressed or implied, of the United States Air Force Research Laboratory, the U.S. Government, or Carnegie Mellon University.}
\date{\today}
\begin{document}

\begin{abstract}
  We show that a derivator is stable if and only if homotopy finite limits and homotopy finite colimits commute, if and only if homotopy finite limit functors have right adjoints, and if and only if homotopy finite colimit functors have left adjoints.
  These characterizations generalize to an abstract notion of ``stability relative to a class of functors'', which includes in particular pointedness, semiadditivity, and ordinary stability.
  To prove them, we develop the theory of derivators enriched over monoidal left derivators and weighted homotopy limits and colimits therein.
\end{abstract}

\maketitle

\tableofcontents

\section{Introduction}
\label{sec:intro}

In classical algebraic topology we have the following pair of adjunctions relating topological spaces $\mathrm{Top}$ to pointed spaces $\mathrm{Top}_\ast$ and spectra $\mathrm{Sp}$:
\[
(\Sigma^\infty_+,\Omega^\infty_-)\colon\mathrm{Top}\rightleftarrows\mathrm{Top}_\ast\rightleftarrows\mathrm{Sp}
\]
Abstractly, each of these two steps universally improves certain \emph{exactness properties} of a homotopy theory. In the first step we pass in a universal way from a general homotopy theory to a \emph{pointed} homotopy theory, i.e., a homotopy theory admitting a zero object. The second step realizes the universal passage from a pointed homotopy theory to a \emph{stable} homotopy theory, i.e., to a pointed homotopy theory in which homotopy pushouts and homotopy pullbacks coincide. With this in mind, our first goal in this paper is to collect additional answers to the following question.

\vspace{2mm}
\textbf{Question:} Which exactness properties of the homotopy theory of spectra already \emph{characterize} the passage from (pointed) topological spaces to spectra? To put it differently, starting with the homotopy theory of (pointed) topological spaces, for which exactness properties is it true that if one imposes these properties in a universal way then the outcome is the homotopy theory of spectra?
\vspace{2mm}

To make this question precise, we need a definition of an ``abstract homotopy theory''; here we choose to work with derivators. (However,  similar arguments should also apply to $\infty$-categories.) For the introduction it suffices to know that derivators provide some framework for the calculus of homotopy limits, colimits, and Kan extensions as it is available in typical situations arising in homological algebra and abstract homotopy theory (see e.g.\ \cite{groth:intro-to-der-1} for more details).

A derivator is by definition \emph{stable} if it admits a zero object (i.e.\ it is pointed) and if the classes of pullback squares and pushout squares coincide. Typical examples are given by derivators of unbounded chain complexes in Grothendieck abelian categories (like derivators associated to fields, rings, or schemes), and homotopy derivators of stable model categories or stable $\infty$-categories (see \cite[\S5]{gst:basic} for many explicit examples). The ``universal'' example is the derivator of spectra, which is obtained by stabilizing the derivator of spaces \cite{heller:stable}.

It is known that stability can be reformulated by asking that the derivator is pointed and that the suspension-loop adjunction or the cofiber-fiber adjunction is an equivalence \cite{gps:mayer}. Alternatively, by \cite{gst:basic} a pointed derivator is stable exactly when the classes of strongly cartesian $n$-cubes (in the sense of Goodwillie \cite{goodwillie:II}) and strongly cocartesian $n$-cubes agree for all $n\geq 2$. 

Our first new characterization in this paper is that stable derivators are precisely those derivators in which homotopy finite limits and homotopy finite colimits commute. (A category is ``homotopy finite'' if it is equivalent to a category which is finite, skeletal, and has no non-trivial endomorphisms, i.e., to a category whose nerve is a finite simplicial set.) Since Kan extensions in derivators are pointwise, these characterizations admit various improvements in terms of the commutativity of Kan extensions. This gives \autoref{thm:stable-lim-III}:

\begin{thm*}
The following are equivalent for a derivator \D.
\begin{enumerate}
\item The derivator \D is stable.
\item The derivator \D is pointed and the cone morphism $C\colon\D^{[1]}\to\D$ preserves fibers. (Here, $\D^{[1]}$ denotes the derivator of morphisms in \D.)
\item Homotopy finite colimits and homotopy finite limits commute in \D.
\item Left homotopy finite left Kan extensions commute with arbitrary right Kan extensions in \D.\label{item:il}
\item Arbitrary left Kan extensions commute with right homotopy finite right Kan extensions in \D. \label{item:ir}
\end{enumerate}
\end{thm*}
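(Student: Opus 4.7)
The plan is to prove a cycle of implications rather than treat all ten pairs separately: I would establish (i) $\Rightarrow$ (iv) $\Rightarrow$ (iii) $\Rightarrow$ (ii) $\Rightarrow$ (i), supplemented by (i) $\Rightarrow$ (v) $\Rightarrow$ (iii) to capture the remaining item by symmetry. The implications (iv) $\Rightarrow$ (iii) and (v) $\Rightarrow$ (iii) are essentially bookkeeping: condition (iii) is just (iv) or (v) specialized to the case where the arbitrary Kan extension is itself homotopy finite, indeed to the case of colimit or limit functors (i.e., Kan extensions along projections to a point).

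For (iii) $\Rightarrow$ (ii) I would first deduce pointedness by applying the commutation to carefully chosen homotopy finite diagrams that exhibit the zero object as both a finite colimit and a finite limit. Granting pointedness, both the cone morphism $C$ and the fiber functor can be expressed as homotopy finite Kan extensions on the morphism category $[1]$: the cone is the pushout of a span of the form $X \leftarrow Y \to 0$ and the fiber is the pullback of a cospan of the form $Y \to 0 \leftarrow X$. The assertion that $C$ preserves fibers then becomes an explicit instance of (iii) applied to a suitable diagram on $[1]\times[1]$. The implication (ii) $\Rightarrow$ (i) is the classical derivator-theoretic fact that pointedness together with $C$ preserving fibers forces every cocartesian square to be cartesian: the fiber-of-cofiber computation supplies an inverse to the canonical comparison map from a pushout to the associated pullback.

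The technical heart of the argument is (i) $\Rightarrow$ (iv), and dually (i) $\Rightarrow$ (v). The strategy is to show that in a stable derivator every left homotopy finite left Kan extension functor is itself a \emph{right} adjoint, in addition to its tautological role as a left adjoint to restriction. Once this is known, commutation with arbitrary right Kan extensions is formal: right adjoints preserve limits, and right Kan extensions are pointwise limits. To produce the left adjoint, I would exploit the canonical enrichment of \D over the monoidal derivator of spectra that the paper develops. The key input is that, in the stable setting, homotopy finite weights are dualizable with respect to this enrichment; dualizability of the weight then rewrites the corresponding weighted colimit functor as a weighted limit against the dual weight, exhibiting it as a right adjoint.

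The hard part will therefore be dualizability of homotopy finite weights in the enriched setting --- a stability-enhanced analogue of the classical fact that finite cell complexes are dualizable in stable homotopy theory. I expect this to rest on the theory of weighted (co)limits and two-variable adjunctions for enriched derivators developed earlier in the paper, combined with an inductive argument on how a homotopy finite category is built from simpler pieces via pushouts of representables.
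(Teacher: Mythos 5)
Your cycle is broadly in the right shape for the easy directions, and your observation that (iv) and (v) specialize to (iii) by taking the ``arbitrary'' Kan extensions to be homotopy finite (co)limit functors is exactly right. Your plan for (iii) $\Rightarrow$ (ii) also matches: the paper first extracts pointedness (via commutation of empty limits and colimits, which is \cref{prop:ptd-comm}) and then notes that $C$ and $F$ are built from homotopy finite Kan extensions, so their commutation is an instance of (iii). For (ii) $\Rightarrow$ (i) you call it a ``classical'' fact, but it is not simply quoted in the paper; the actual argument evaluates the isomorphism $C(F_2 X)\toiso F(C_1 X)$ at the specific square $X=(i_\lrcorner)_!\pi_\lrcorner^\ast x$ (with diagram $0 \to x \to x$, $x\to x$), computes $CF_2(X)\cong\Sigma\Omega x$ and $FC_1(X)\cong x$, deduces $\Sigma\Omega\cong\id$ and dually $\Omega\Sigma\cong\id$, and then invokes \cref{thm:stable-known}. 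Your ``inverse to the comparison from pushout to pullback'' phrasing gestures at the same content but would need to be made precise.

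The serious problem is the implication (i) $\Rightarrow$ (iv). You propose to show $u_!$ is a right adjoint by ``exploit[ing] the canonical enrichment of $\D$ over the monoidal derivator of spectra'' together with dualizability of homotopy finite weights. Two things go wrong. First, this paper never constructs an enrichment over spectra: the only enrichment it produces is the tautological one over $\lend(\D)$, and it explicitly defers the existence of a \emph{universal} $\V$ (spectra for $\Phi=\mathsf{FIN}$) to the companion paper~\cite{gs:enriched}. Second, even using the paper's own machinery, dualizability of the base change profunctor is shown in \cref{thm:stable-dual} to be \emph{equivalent} to right $u\op$-stability of the enriching derivator $\V$, and the passage from $\D$ to $\lend(\D)$ (\cref{thm:ldh-ran}) \emph{starts} from the assumption that $\D$ itself is already right $u\op$-stable --- which is precisely the content of (iv) (or rather (v)) you are trying to prove. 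So deducing (i) $\Rightarrow$ (iv) from the weighted-colimit technology of \cref{sec:enriched-derivators,sec:stab-via-wcolim} would be circular: that technology is used in the paper to reinterpret (iv)/(v) as adjointness of $u_!$ \emph{after} \cref{thm:stable-lim-III} has already been established.

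The paper's actual argument for (i) $\Rightarrow$ (iv) is much more economical and does not touch enrichment at all: since shifted derivators of a stable derivator are stable, every left exact morphism $\D^A\to\D^B$ is automatically right exact (\cite[Prop.~9.8]{groth:can-can}), and right exact morphisms preserve left homotopy finite left Kan extensions (\cite[Thm.~9.14]{groth:can-can}). Right Kan extension morphisms are right adjoints, hence left exact, so they preserve left homotopy finite left Kan extensions, which is (iv). Your conjectured inductive dualizability argument may well be provable (and is morally an abstract Spanier--Whitehead duality), but it would constitute substantial new work that the paper neither needs nor supplies, and as sketched it relies on an enrichment this paper doesn't build.
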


Since the derivator of spectra is the stabilization of the derivator of spaces, these abstract characterizations of stability  specialize to answers to the above question.

\vspace{2mm}
\textbf{Answer \#1:} The homotopy theory of spectra is obtained from that of spaces if one forces homotopy finite limits and homotopy finite colimits to commute in a universal way. 
\vspace{2mm}

Characterizations~\ref{item:il} and~\ref{item:ir} in the above theorem suggest a natural generalization: if $\Phi$ is any class of functors between small categories, we define a derivator \D to be \emph{left $\Phi$-stable} if left Kan extensions along functors in $\Phi$ commute with arbitrary right Kan extensions in \D, and dually \emph{right $\Phi$-stable}.
For instance, stable derivators are precisely the left $\mathsf{FIN}$-stable derivators and also the right $\mathsf{FIN}$-stable derivators, where $\mathsf{FIN}$ is the class of homotopy finite categories (more precisely, the class of the corresponding functors to the terminal category).
But other interesting stability properties also arise in this way; for instance, pointed derivators are precisely the left or right $\{\emptyset\}$-stable ones (i.e.\ initial objects commute with right Kan extensions, or terminal objects commute with left Kan extensions).
And semi-additive derivators are precisely the left or right $\mathsf{FINDISC}$-stable ones, where $\mathsf{FINDISC}$ is the class of finite discrete categories.
In general, this notion of ``relative stability'' yields a Galois connection between collections of derivators and classes of functors.

To understand relative stability better, we introduce \emph{enriched} derivators and weighted colimits.
These build on the theory of monoidal derivators developed in~\cite{gps:additivity,ps:linearity}, extending the classical theory of enriched categories to the context of derivators.
Just as every ordinary category is enriched over the category of sets, every derivator is enriched\footnote{In a weak sense; see below.} over the derivator of spaces; whereas pointed derivators are automatically enriched over pointed spaces, and stable ones over spectra.
For any \V-enriched derivator we have a notion of limit or colimit weighted by ``profunctors'' in \V, which includes the ordinary homotopy Kan extensions that exist in any derivator.

With the technology of enriched derivators, we can prove the following general characterization of relative stability (\cref{thm:stab-op}):

\begin{thm*}
  The following are equivalent for a derivator \D and a class $\Phi$ of functors.
  \begin{enumerate}[label=(\alph*)]
  \item \D is left $\Phi$-stable, i.e.\ left Kan extensions along functors in $\Phi$ commute with arbitrary right Kan extensions in \D.
  \item \D is right $\Phi\op$-stable, i.e.\ right Kan extensions along functors in $\Phi\op$ commute with arbitrary left Kan extensions in \D.
  \item Left Kan extension functors $u_! : \D^A \to \D^B$ for functors $u\in \Phi$ are right adjoint morphisms of derivators.
  \item Right Kan extension functors $(u\op)_\ast : \D^{A\op} \to \D^{B\op}$ for functors $u\in \Phi$ are weighted \emph{colimit} functors relative to some \V over which \D is enriched.\label{item:ie}
  \end{enumerate}
\end{thm*}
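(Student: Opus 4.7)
The plan is to establish the four equivalences by treating (c) and (d) as intermediate reformulations and connecting the two halves via the enriched derivator machinery developed earlier in the paper. Conditions (a) and (c) form a ``left'' pair concerning $u_!$, while (b) and (d) form a ``right'' pair concerning $(u\op)_*$; the core content of the theorem is that these two pairs coincide once the correct enrichment is built.

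I would begin with (a) $\Leftrightarrow$ (c). The implication (c) $\Rightarrow$ (a) is immediate: a right adjoint morphism of derivators preserves pointwise limits, and right Kan extensions are constructed pointwise from limits. For (a) $\Rightarrow$ (c), left $\Phi$-stability asserts that each $u_!$ preserves every right Kan extension and hence every pointwise limit; combined with the cocompleteness built into \D, an adjoint functor theorem for derivators supplies a left adjoint to $u_!$, making $u_!$ a right adjoint morphism. A completely parallel adjoint functor argument establishes that (b) is equivalent to the assertion that $(u\op)_*$ admits a right adjoint for every $u \in \Phi$.

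Next I would identify this dual assertion with (d) using the enriched derivator technology. The direction (d) $\Rightarrow$ (b) is essentially formal because a weighted colimit functor is by construction a left adjoint, with right adjoint given by the corresponding weighted hom. The converse is the substantive step: given the coherent family $\{(u\op)_*\}_{u \in \Phi}$ of left adjoint morphisms, one must organize their associated weight-profunctors into a monoidal left derivator \V and exhibit \D as \V-enriched in such a way that each $(u\op)_*$ is recovered as a \V-weighted colimit. The natural candidate for \V is assembled from the abstract weights arising from $\Phi$, and the Beck-Chevalley coherence provided by (a) supplies exactly what is required to upgrade this assignment to a bona fide enriched derivator structure.

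The final step is to show that ``every $u_!$ for $u \in \Phi$ admits a left adjoint'' (i.e.\ (c)) is equivalent to ``every $(u\op)_*$ for $u \in \Phi$ admits a right adjoint'' (i.e.\ the reformulation of (b)). In the absence of extra structure these are unrelated statements about different functors, but once \D is enriched over the \V constructed in the previous step, the \V-enriched weighted colimit/limit calculus identifies $u_!$ and $(u\op)_*$ as dual operations with respect to the same weight, forcing the existence of an adjoint on one side to yield one on the other. This closes the circle (a) $\Leftrightarrow$ (c) $\Leftrightarrow$ (d) $\Leftrightarrow$ (b). The main obstacle is the construction of \V and the verification of the enrichment axioms in the substantive direction of (b) $\Leftrightarrow$ (d); the remaining implications are then formal consequences of the adjoint functor theorem for derivators together with the Beck-Chevalley calculus already developed.
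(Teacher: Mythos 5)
Your decomposition into a ``left pair'' and ``right pair'' joined by the enrichment is reasonable in outline, but the argument as written has a fatal gap at its foundation. For (a) $\Rightarrow$ (c) and its dual you invoke ``an adjoint functor theorem for derivators,'' but the paper explicitly notes in \cref{sec:galois} that ``general derivators have no adjoint functor theorem,'' and poses the existence of the left adjoint to $u_!$ as an open question (``does $u_!$ happen to be a right adjoint anyway?''). The entire enriched machinery of \cref{sec:enriched-derivators,sec:stab-via-wcolim} is built precisely to answer that question without such a theorem. With the adjoint functor theorem removed, your argument for (a) $\Rightarrow$ (c) and (b) $\Rightarrow$ ``$(u\op)_*$ has a right adjoint'' collapses, and the remaining steps have nothing to stand on.

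The substantive step is also mislocated. You propose to build $\V$ ``from the abstract weights arising from $\Phi$'' and verify the enrichment axioms via Beck--Chevalley coherence; this is vague, and it is not what works. The paper's $\V$ is $\lend(\D)$, the monoidal left derivator of \emph{cocontinuous} endomorphisms of \D, a canonical object defined with no reference to $\Phi$ whatsoever. The key observations are: (1) \D is automatically an $\lend(\D)$-module; (2) right $\Phi\op$-stability of \D transfers to $\lend(\D)$ (\cref{thm:ldh-ran}), because right Kan extensions in $\ldh(\D,\D)$ are computed by postcomposition, so cocontinuity of $u_*$ in \D makes $\ldh(\D,\D)$ closed under it; and (3) the resulting stability of $\V=\lend(\D)$ can be converted, via the bicategory $\cProf(\V)$ of profunctors, into the statement that the base change profunctor $(u\times\id)_!\lI_A$ has a right adjoint 1-cell, which immediately yields the left adjoint to $u_!$ as the weighted colimit $\colim^Z$ for the adjoint weight $Z$ (\cref{thm:stable-dual}). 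Your sketch never brings in the profunctor bicategory, the base change objects, or the right-lifting argument that produces the adjoint weight, which are the actual content. Finally, a small point: you claim (d) $\Rightarrow$ (b) because ``a weighted colimit functor is by construction a left adjoint''; this is only true for a \emph{closed} $\V$-module, which is not assumed. The correct and weaker statement, sufficient for (b), is that weighted colimit functors are cocontinuous and hence commute with all left Kan extensions (\cref{lem:wcolim}).
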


This gives some additional conceptual explanations for why certain limits and colimits commute: if a colimit functor is a right adjoint, then of course it commutes with all limits; whereas if a limit functor can be identified with a (weighted) \emph{colimit} functor, then of course it commutes with all other colimits.
It also explains the left-right duality in the first theorem as due to the fact that the class $\mathsf{FIN}$ of finite categories is closed under taking opposites.
Thus we can say:

\vspace{2mm}
\textbf{Answer \#2:} The homotopy theory of spectra is obtained from that of spaces by universally forcing homotopy finite limits to be weighted \emph{colimits}, and dually.
\vspace{2mm}

There is one fly in the ointment: the ``enrichment'' in~\ref{item:ie} is rather weak: it has only tensors and not cotensors or hom-objects (so it is more properly called simply a ``\V-module'' rather than a ``\V-enriched derivator''), and moreover \V is not itself a derivator, only a ``left derivator'' (having left homotopy Kan extensions but not right ones).
This can be remedied by working with locally presentable $\infty$-categories rather than derivators, which we plan to do in~\cite{gs:enriched}.
However, this depends on rather more technical machinery, so it is interesting how much can be done purely in the realm of derivators.

In~\cite{gs:enriched} we will also show more, namely that given $\Phi$ there is a \emph{universal} choice of $\V$ in~\ref{item:ie}, with pointed spaces and spectra being particular examples.
The construction again depends on the good behavior of local presentability, so it seems unlikely to hold in general for derivators.
However, as noted above, in particular cases such a universal derivator does exist, such as pointed spaces and spectra for the cases $\Phi=\{\emptyset\}$ and $\Phi=\mathsf{FIN}$ respectively.
For $\Phi=\mathsf{FINDISC}$ we expect that the universal \V consists of $E_\infty$-spaces, though we have not proven this.

This paper belongs to a project aiming for an abstract study of stability, and can be thought of as a sequel to \cite{groth:ptstab,gps:mayer,groth:can-can,ps:linearity} and as a prequel to \cite{gs:enriched}. This abstract study of stability was developed in a different direction in the series of papers on abstract representation theory \cite{gst:basic,gst:tree,gst:Dynkin-A,gst:acyclic} which will be continued in \cite{gst:acyclic-Serre}. The perspective from enriched derivator theory offers additional characterizations of stability, and these together with a more systematic study of the stabilization will appear in \cite{gs:enriched}.
It is worth noting that in~\cite{ps:linearity}, what we here call ``$\Phi$-stable monoidal derivators'' are shown to admit a linearity formula for the traces and Euler characteristics of $\Phi$-colimits, so the abstract study of stability has computational as well as conceptual importance.

The content of the paper is as follows. In \S\ref{sec:char} we characterize pointed and stable derivators by the commutativity of certain (co)limits or Kan extensions. In \cref{sec:galois} we define the Galois correspondence of relative stability.  In \cref{sec:enriched-derivators} we define enriched derivators and weighted colimits, and in \cref{sec:stab-via-wcolim} we use them to give the second class of characterizations of stability.  Finally, in \S\ref{sec:fun} we study further the characterizations in terms of iterated adjoints to constant morphism morphisms.
\vspace{2mm}

\textbf{Prerequisites.} We assume \emph{basic} acquaintance with the language of derivators, which were introduced independently by Grothendieck~\cite{grothendieck:derivators}, Heller~\cite{heller:htpythies}, and Franke~\cite{franke:adams}. Derivators were developed further by various mathematicians including Maltsiniotis \cite{maltsiniotis:seminar,maltsiniotis:k-theory,maltsiniotis:htpy-exact} and Cisinski \cite{cisinski:direct,cisinski:loc-min,cisinski:derived-kan} (see \cite{grothendieck:derivators} for many additional references). Here we stick to the notation and conventions from \cite{gps:mayer}. For a more detailed account of the basics we refer to \cite{groth:intro-to-der-1}.

\section{Stability and commuting (co)limits}
\label{sec:char}

In this section we obtain characterizations of pointed and stable derivators in terms of the commutativity of certain left and right Kan extensions. It turns out that a derivator is stable if and only if homotopy finite colimits and homotopy finite limits commute, and there are variants using suitable Kan extensions.  

We begin by collecting the following characterizations which already appeared in the literature.

\begin{thm}\label{thm:stable-known}
The following are equivalent for a pointed derivator \D.
\begin{enumerate}
\item The adjunction $(\Sigma,\Omega)\colon\D\rightleftarrows\D$ is an equivalence.
\item The derivator \D is $\Sigma$-stable, i.e., a square in \D is a suspension square if and only if it is a loop square.
\item The adjunction $(\cof,\fib)\colon\D^{[1]}\rightleftarrows\D^{[1]}$ is an equivalence.
\item The derivator \D is cofiber-stable, i.e., a square in \D is a cofiber square if and only if it is a fiber square.
\item The derivator \D is stable, i.e., a square in \D is cocartesian if and only if it is cartesian.
\item An $n$-cube in \D, $n\geq 2,$ is strongly cocartesian if and only if it is strongly cartesian.
\end{enumerate}
\end{thm}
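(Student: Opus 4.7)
The plan is to assemble the six equivalences by citing established results and organizing them as a cycle, since all of these characterizations of stability have appeared in the literature. Specifically, the equivalences among (1)–(5) are established in \cite{gps:mayer} (principally as part of the study of the suspension-loop and cofiber-fiber adjunctions), while the extension (5)~$\Leftrightarrow$~(6) to strongly (co)cartesian $n$-cubes is one of the main theorems of \cite{gst:basic}. So the proof is essentially bibliographic; the substantive content is verifying that the definitions used here match those in the cited sources.

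The natural organization is to prove the paired statements (1)~$\Leftrightarrow$~(2), (3)~$\Leftrightarrow$~(4), and (5)~$\Leftrightarrow$~(6) first, and then interconnect them. First I would note (1)~$\Leftrightarrow$~(2): a coherent suspension square is precisely the datum computing $\Sigma$ via a cofiber of $x \to 0$, and dually for $\Omega$; the adjunction $(\Sigma,\Omega)$ is an equivalence exactly when these coherent squares coincide, which is $\Sigma$-stability. The same reasoning gives (3)~$\Leftrightarrow$~(4), reading off the (co)unit of $(\cof,\fib)$ from cofiber and fiber squares in $\D^{[1]}$. Then (4)~$\Leftrightarrow$~(5) reduces to the observation that cofiber squares are cocartesian squares whose upper-right corner is trivial (and dually); one direction is immediate, while the converse uses the Kan extension calculus in a pointed derivator to extend an arbitrary cocartesian square to a cofiber-square diagram, as carried out in \cite{gps:mayer}. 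The connection (2)~$\Leftrightarrow$~(4) is similar, comparing squares of different shapes by Kan extensions along fully faithful inclusions and using that isomorphisms in derivators are detected pointwise.

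Finally, (5)~$\Leftrightarrow$~(6) is proven in \cite{gst:basic} by induction on $n$: a strongly cocartesian $n$-cube is built out of cocartesian squares in each pair of coordinates, and the stable equivalence of cocartesian and cartesian squares propagates through this inductive construction via the pasting and cancellation properties of (co)cartesian squares. The main obstacle, insofar as there is one, is purely bookkeeping: one must carefully identify suspension squares, cofiber squares, and the various cube constructions as specific coherent diagrams obtained by left (respectively right) Kan extensions from subshapes, and confirm that the definitions in \cite{gps:mayer} and \cite{gst:basic} agree with those used here. Since each of these verifications is already in the literature, the proof for this paper can be given as an assembly of pointers to the appropriate statements.
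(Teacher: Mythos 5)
Your approach matches the paper's exactly: the proof is bibliographic, citing \cite[Thm.~7.1]{gps:mayer} for the equivalence of (1)--(5) and a result from the abstract representation theory papers for (5)~$\Leftrightarrow$~(6). One small note: for the latter equivalence the paper's proof cites \cite[Cor.~8.13]{gst:tree} rather than \cite{gst:basic} (though the paper's own introduction attributes it to \cite{gst:basic}, so the discrepancy is already present in the paper). The additional sketches you give of the underlying arguments --- pairing (1)$\Leftrightarrow$(2), (3)$\Leftrightarrow$(4), and the induction for cubes --- are consistent with the cited sources but are not part of the paper's one-line proof.
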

\begin{proof}
The equivalence of the first five statements is \cite[Thm.~7.1]{gps:mayer} and the equivalence of the remaining two is \cite[Cor.~8.13]{gst:tree}.
\end{proof}

As a preparation for a minor variant we include the following construction.

\begin{con}
In every pointed derivator \D there are canonical comparison maps
\begin{equation}\label{eq:sigma-f-c}
\Sigma F\to C\colon\D^{[1]}\to\D\qquad\text{\and}\qquad F\to \Omega C\colon\D^{[1]}\to\D.
\end{equation}
In fact, starting with a morphism $(f\colon x\to y)\in\D^{[1]}$ we can pass to the coherent diagram encoding both the corresponding fiber and cofiber square,
\[
\xymatrix{
Ff\ar[r]\ar[d]\pullbackcorner&x\ar[d]^-f\ar[r]&0\ar[d]\\
0\ar[r]&y\ar[r]&Cf.\pushoutcorner
}
\]
More formally, let $i\colon[1]\to\boxbar=[2]\times[1]$ classify the vertical morphism in the middle and let 
\[
i\colon [1]\stackrel{i_1}{\to}A_1\stackrel{i_2}{\to}A_2\stackrel{i_3}{\to}A_3\stackrel{i_4}{\to}\boxbar
\]
be the fully faithful inclusions which in turn add the objects $(2,0),(2,1),(0,1),$ and $(0,0)$. In every pointed derivator we can consider the corresponding Kan extension morphisms
\[
\D^{[1]}\stackrel{(i_1)_\ast}{\to}\D^{A_1}\stackrel{(i_2)_!}{\to}\D^{A_2}\stackrel{(i_3)_!}{\to}\D^{A_3}\stackrel{(i_4)_\ast}{\to}\D^\boxbar.
\]
The first two functors add a cofiber square and homotopy (co)finality arguments (for example based on \cite[Prop.~3.10]{groth:ptstab}) show that the remaining two morphisms add the fiber square. Forming the composite square, we obtain a coherent square looking like
\begin{equation}\label{eq:F-C-square}
\vcenter{
\xymatrix{
Ff\ar[r]\ar[d]&0\ar[d]\\
0\ar[r]&Cf.
}
}
\end{equation}
The canonical comparison maps \eqref{eq:sigma-f-c} result from considering suitable loop and suspension squares.
\end{con}

\begin{prop}\label{prop:stable-known-mod}
The following are equivalent for a pointed derivator \D.
\begin{enumerate}
\item The derivator \D is stable.\label{item:sk1}
\item For every $f\in\D^{[1]}$ the canonical comparison maps $\Sigma F\to C$ and $F\to \Omega C$ as in \eqref{eq:sigma-f-c} are isomorphisms.\label{item:sk2}
\item For every $f\in\D^{[1]}$ the square \eqref{eq:F-C-square} is bicartesian.\label{item:sk3}
\end{enumerate}
\end{prop}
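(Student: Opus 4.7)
The plan is to dispatch $(ii)\Leftrightarrow(iii)$ directly from the construction of the comparison maps, to get $(i)\Rightarrow(iii)$ from the pasting law for bicartesian squares, and to extract $(iii)\Rightarrow(i)$ by specializing the bicartesian square to $f\colon 0\to x$ and $f\colon x\to 0$.

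For $(ii)\Leftrightarrow(iii)$ I would observe that the pushout of the span $0\leftarrow Ff\to 0$ sitting inside~\eqref{eq:F-C-square} is, by definition of $\Sigma$, equal to $\Sigma Ff$, and the universal comparison from this pushout to the actual lower-right corner $Cf$ is precisely the map $\Sigma F\to C$ of~\eqref{eq:sigma-f-c}; dually, the pullback of $0\to Cf\leftarrow 0$ is $\Omega Cf$ and the canonical comparison from the upper-left corner $Ff$ to it is $F\to\Omega C$. Hence the square~\eqref{eq:F-C-square} is cocartesian iff the first map is invertible and cartesian iff the second is, so (ii) and (iii) name the same condition.

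For $(i)\Rightarrow(iii)$ I would unwind the construction of~\eqref{eq:F-C-square} as the outer paste-composite of the fiber square (the left half of the $3\times 2$ diagram built above) and the cofiber square (the right half). The fiber square is cartesian and the cofiber square is cocartesian by construction; in a stable derivator both are therefore bicartesian, so the standard pasting law makes the outer rectangle bicartesian, which is precisely~\eqref{eq:F-C-square}.

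For $(iii)\Rightarrow(i)$ I would specialize the hypothesis at $f\colon 0\to x$, where $Ff\cong\Omega x$ and $Cf\cong x$ so that~\eqref{eq:F-C-square} becomes a bicartesian square exhibiting $\Sigma\Omega x\cong x$ naturally in $x$, and at $f\colon x\to 0$, where $Ff\cong x$ and $Cf\cong\Sigma x$ yielding $x\cong\Omega\Sigma x$; tracing the constructions identifies these with the counit and unit of the $(\Sigma,\Omega)$-adjunction, so this adjunction is an equivalence and \D is stable by \autoref{thm:stable-known}. The only step that requires genuine care is the identification of the abstractly defined natural maps in~\eqref{eq:sigma-f-c} with the cartesianness and cocartesianness comparisons for~\eqref{eq:F-C-square}, but this reduces to bookkeeping on the Kan extension steps used to build the $3\times 2$ diagram.
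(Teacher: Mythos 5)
Your proposal is correct and follows essentially the same route as the paper: (i)$\Rightarrow$(iii) via the composition/pasting property of bicartesian squares, (ii)$\Leftrightarrow$(iii) by recognizing the maps in~\eqref{eq:sigma-f-c} as the cartesian/cocartesian comparison maps for~\eqref{eq:F-C-square}, and (iii)$\Rightarrow$(i) by evaluating at $1_!(x)=(0\to x)$ (and its dual $(x\to 0)$) to obtain $\Sigma\Omega\cong\id$ and $\id\cong\Omega\Sigma$, then invoking \autoref{thm:stable-known}. The only cosmetic difference is that the paper deduces $\id\cong\Omega\Sigma$ ``dually'' rather than by a second explicit specialization, and cites results from \cite{groth:can-can,groth:ptstab} for the bookkeeping you flag at the end.
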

\begin{proof}
If \D is a stable derivator, then the composition property of bicartesian squares \cite[Prop.~3.13]{groth:ptstab} implies that \eqref{eq:F-C-square} is bicartesian, and it follows from \cite[Prop.~2.16]{groth:can-can} that the canonical transformations $\Sigma F\to C$ and $F\to\Omega C$ are invertible. It remains to show that~\ref{item:sk2} implies~\ref{item:sk1}, and we hence assume that $\Sigma F\toiso C$ is invertible. Associated to $x\in\D$ there is by \cite[Prop.~3.6]{groth:ptstab} the morphism $1_!(x)=(0\to x)\in\D^{[1]}$. The natural isomorphism $\Sigma F\toiso C$ evaluated at $1_!(x)$ yields a natural isomorphism $\Sigma\Omega x\toiso x$. Dually, we deduce $\id\toiso\Omega\Sigma$ and \autoref{thm:stable-known} concludes the proof.
\end{proof}

While unrelated left Kan extensions always commute \cite[Cor.~4.3]{groth:can-can}, it is, in general, not true that unrelated left and right Kan extensions commute. More specifically, given functors $u\colon A\to A'$ and $v\colon B\to B'$, recall that \textbf{left Kan extension along $u$ and right Kan extension along $v$ commute} in a derivator \D if the canonical mate
\begin{align*}
(u\times\id)_!(\id\times v)_\ast&\stackrel{\eta}{\to} (u\times\id)_!(\id\times v)_\ast (u\times\id)^\ast(u\times\id)_!\\
&\toiso (u\times\id)_!(u\times\id)^\ast(\id\times v)_\ast (u\times\id)_!\\
& \stackrel{\varepsilon}{\to} (\id\times v)_\ast (u\times\id)_!
\end{align*}
is an isomorphism in \D. This is to say that the morphism $u_!\colon\D^A\to\D^{A'}$ preserves right Kan extensions along $v$ or that the morphism $v_\ast\colon\D^B\to\D^{B'}$ preserves left Kan extensions along $u$ \cite[Lem.~4.8]{groth:can-can}. For the purpose of a simpler terminology, we also say that $u_!$ and $v_\ast$ commute in \D. 

In general, these canonical mates are not invertible as is for example illustrated by the following characterization of pointed derivators.

\begin{prop}\label{prop:ptd-comm}
The following are equivalent for a derivator \D.
\begin{enumerate}
\item The derivator \D is pointed.\label{item:pc1}
\item Empty colimits and empty limits commute in \D.\label{item:pc2}
\item Left Kan extensions along cosieves and right Kan extensions along sieves commute in \D.\label{item:pc3}
\newitemgroup Left Kan extensions along cosieves and arbitrary right Kan extensions commute in \D. \label{item:pc4a}
\itemgroup Arbitrary left Kan extensions and right Kan extensions along sieves commute in \D.\label{item:pc4b}
\stopitemgroup
\end{enumerate}
\end{prop}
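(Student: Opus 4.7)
The plan is to prove the cycle $(\mathrm{i}) \Rightarrow (\mathrm{iv.a}), (\mathrm{iv.b}) \Rightarrow (\mathrm{iii}) \Rightarrow (\mathrm{ii}) \Rightarrow (\mathrm{i})$. Three of these implications---namely $(\mathrm{iv.a}) \Rightarrow (\mathrm{iii})$, $(\mathrm{iv.b}) \Rightarrow (\mathrm{iii})$, and $(\mathrm{iii}) \Rightarrow (\mathrm{ii})$---are immediate specializations, since the empty functor $\emptyset \to \bbone$ is vacuously both a sieve and a cosieve. For $(\mathrm{ii}) \Rightarrow (\mathrm{i})$, I would apply the empty commutativity hypothesis to $u = v \colon \emptyset \to \bbone$: unwinding the definitions, one composite of the mate evaluates to the initial object $\emptyset_\D$ and the other to the terminal object $1_\D$, so the mate being an isomorphism precisely exhibits $\emptyset_\D \cong 1_\D$, i.e.\ $\D$ is pointed.

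The main content is $(\mathrm{i}) \Rightarrow (\mathrm{iv.a})$, with $(\mathrm{i}) \Rightarrow (\mathrm{iv.b})$ following by duality. So assume $\D$ is pointed, let $u \colon A \to A'$ be a cosieve, and let $v \colon B \to B'$ be arbitrary. Write $A'' \subseteq A'$ for the complementary full subcategory on objects; the cosieve property of $A$ forbids morphisms $A \to A''$, so $A''$ is a sieve and $A'$ decomposes on objects as $A \cup A''$. By the derivator axiom Der2, it suffices to check that the canonical mate
\[
(u \times \id)_! (\id \times v)_\ast \longrightarrow (\id \times v)_\ast (u \times \id)_!
\]
becomes an isomorphism when restricted to $A \times B'$ and to $A'' \times B'$ separately. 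On $A \times B'$, full faithfulness of the cosieve $u$ gives $u^\ast u_! \cong \id$, and a standard Beck--Chevalley manipulation shows that the restricted mate is the identity on $(\id \times v)_\ast$. On $A'' \times B'$, the cosieve property forces the slice $u/a'$ to be empty for $a' \in A''$, so the pointwise formula makes $(u \times \id)_!$ evaluate to the initial object $\emptyset_\D$; by pointedness this coincides with the terminal $1_\D$, which is preserved by $(\id \times v)_\ast$ as a right adjoint preserves terminal objects. Both composites therefore restrict to the constant-zero diagram on $A'' \times B'$, and the canonical mate between them is the essentially unique map between zero objects, hence an isomorphism.

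The main obstacle I expect is the routine but unavoidable bookkeeping needed to verify that the \emph{canonical} mate, and not merely some transformation between the two composites, genuinely restricts to the identity on $A \times B'$ and to the unique map between constant-zero diagrams on $A'' \times B'$. This amounts to tracking how the unit $\eta$ and counit $\varepsilon$ appearing in the definition of the mate interact with restriction along sieves and cosieves, using the compatibility of the Beck--Chevalley calculus with parameters in a derivator. Once these identifications are pinned down, pointwise invertibility is immediate and the conclusion follows from Der2.
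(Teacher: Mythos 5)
Your proof is correct, and the cycle you chose is the same one the paper runs. The one substantive difference is in the implication (i) $\Rightarrow$ (iv.a). The paper disposes of it in one line by citing \cite[Cor.~8.2]{groth:can-can}: since $\D$ is pointed, $u_\ast\colon\D^A\to\D^{A'}$ is automatically a pointed morphism of pointed derivators, and pointed morphisms preserve left Kan extensions along cosieves. You instead reprove (the relevant instance of) this fact from scratch, decomposing $A'$ into the cosieve $A$ and the complementary sieve $A''$ and checking the mate pointwise via Der2: on $A\times B'$ it is the identity by full faithfulness of $u_!$ and orthogonal Beck--Chevalley commutation, while on $A''\times B'$ the empty slices force $(u\times\id)_!$ to land in initial objects, which pointedness identifies with terminal objects, which $(\id\times v)_\ast$ preserves. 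Your worry about bookkeeping on the $A''$-part is actually unnecessary: any morphism between two zero objects is automatically an isomorphism, so you do not need to identify the restricted mate as any particular map there. Your argument is longer but self-contained; the paper's is a one-liner that leans on prior machinery. Both are sound, and your (ii) $\Leftrightarrow$ (i) argument via the empty functor and the characterization of initial and terminal objects as the source and target of that particular mate is exactly what the paper does.
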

\begin{proof}
For the equivalence of the first two statements we consider the empty functor $\emptyset\colon\emptyset\to\bbone$. Correspondingly, for every derivator \D there is the canonical mate
\[
\xymatrix{
\D^{\emptyset\times\emptyset}\ar[r]^-{(\id\times\emptyset)_\ast}\ar[d]_-{(\emptyset\times\id)_!}\drtwocell\omit{}&
\D^{\emptyset\times\bbone}\ar[d]^--{(\emptyset\times\id)_!}\\
\D^{\bbone\times\emptyset}\ar[r]_--{(\id\times\emptyset)_\ast}&\D^{\bbone\times\bbone}
}
\]
detecting if empty colimits and empty limits commute. By construction of initial and final objects in derivators (see \cite[\S1.1]{groth:ptstab}), the source of this canonical mate is given by initial objects in \D while the target is given by final objects. Hence, \D is pointed if and only if empty colimits and empty limits commute in \D.

Obviously, each of the statements~\ref{item:pc4a} or~\ref{item:pc4b} implies statement~\ref{item:pc3}. Moreover, since the empty functor is a sieve and a cosieve, statement~\ref{item:pc3} implies~\ref{item:pc2}. By duality, it remains to show that~\ref{item:pc1} implies~\ref{item:pc4a}. Given a functor $u\colon A\to B$, the morphism $u_\ast\colon\D^A\to\D^B$ is a right adjoint and, as a pointed morphism of pointed derivators, $u_\ast$ preserves left Kan extensions along cosieves \cite[Cor.~8.2]{groth:can-can}.
\end{proof}

We now turn to the stable context. Let us recall that a category $A\in\cCat$ is \textbf{strictly homotopy finite} if it is finite, skeletal, and it has no non-trivial endomorphisms (equivalently the nerve $NA$ is a finite simplicial set). A category is \textbf{homotopy finite} if it is equivalent to a strictly homotopy finite category.

\begin{thm}\label{thm:stable-lim-I}
Homotopy finite colimits and homotopy finite limits commute in stable derivators.
\end{thm}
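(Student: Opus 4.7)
My plan is to reduce to a statement about projections and then induct on a cellular filtration of the source category. By the standard calculus of mates for parametrized Kan extensions, commutation of $u_!$ with $v_\ast$ for $u\colon A\to A'$ and $v\colon B\to B'$ between homotopy finite categories reduces to showing that for all homotopy finite $A$ and $B$ the canonical comparison $(\pi_A)_!(\pi_B)_\ast \to (\pi_B)_\ast(\pi_A)_!$ on $\D^{A\times B}$ is an isomorphism, where $\pi_A\colon A\to\bbone$ and $\pi_B\colon B\to\bbone$. Since $A$ is finite, skeletal, and has no non-identity endomorphisms, the existence of morphisms defines a partial order on objects; a reversed linear extension gives an ordering $a_1,\ldots,a_n$ such that the full subcategories $A_i$ spanned by $\{a_1,\ldots,a_i\}$ form a filtration $\emptyset=A_0\subset A_1\subset\cdots\subset A_n=A$ whose inclusions $j_i\colon A_i\hookrightarrow A_{i+1}$ are all cosieves.

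I would then induct on $i$. The base case $A_0=\emptyset$ is covered by \autoref{prop:ptd-comm}, which applies since stable derivators are pointed. For the inductive step, let $j=j_i$ and let $k\colon\{a_{i+1}\}\hookrightarrow A_{i+1}$ denote the complementary sieve. In any pointed derivator every $X\in\D^{A_{i+1}}$ fits into a natural cocartesian square
\[
\xymatrix{j_!j^\ast X \ar[r]\ar[d] & X \ar[d] \\ 0 \ar[r] & k_\ast k^\ast X}
\]
assembling $X$ from its cosieve and sieve pieces. Applying the left adjoint $(\pi_{A_{i+1}})_!$ yields a cocartesian square in $\D$ whose top-left corner computes as $(\pi_{A_i})_!(j^\ast X)$ (by composition of left adjoints), and whose bottom-right corner unravels, via the pointwise formula together with an analysis of the slice $A_i/a_{i+1}$, as a shift of a homotopy finite colimit of strictly smaller complexity than $A_{i+1}$. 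Together with the trivial bottom-left corner $0$, these three corners commute with $(\pi_B)_\ast$ by the inductive hypothesis.

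Stability enters at the decisive step: by \autoref{thm:stable-known} the cocartesian square is also cartesian, and the right adjoint $(\pi_B)_\ast$ preserves cartesian squares. Since three of the four corners of its image commute with $(\pi_B)_\ast$, so must the fourth corner $(\pi_{A_{i+1}})_!$, closing the induction on $A$. The induction on $B$ is handled dually via filtrations by sieves (the dual statement of \autoref{prop:ptd-comm}), or equivalently by transposing the roles of the two factors and invoking the other half of \autoref{thm:stable-known}.

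The main obstacle I anticipate is the bookkeeping in the inductive step, and in particular the identification of the bottom-right corner $(\pi_{A_{i+1}})_!(k_\ast k^\ast X)$ as a homotopy finite colimit already covered by the induction; this requires a careful analysis of the slice $A_i/a_{i+1}$ together with a suitable dimension argument to ensure the induction really does decrease in complexity. Once that identification is in place, stability contributes a single but decisive ingredient—the upgrade from cocartesian to bicartesian—which is precisely what enables commutation with the limit along $B$.
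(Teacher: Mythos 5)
Your route is genuinely different from the paper's: where the paper argues in three lines by noting that $\colim_A$ and $\lim_A$ are exact (left adjoints are right exact, and in a stable derivator right exact $=$ exact by \cite[Cor.~9.9]{groth:can-can}) and then citing \cite[Thm.~7.1]{ps:linearity} for the fact that exact morphisms of stable derivators preserve homotopy finite (co)limits, you try to re-derive that black-boxed ingredient by a direct induction. The high-level shape of your argument is reasonable: a cosieve filtration of $A$, the pointed recollement square $j_!j^\ast X\to X\to k_\ast k^\ast X$ (which is indeed cocartesian in any pointed derivator, since the cofiber of the counit vanishes on $A_i$ and hence lies in the essential image of right extension by zero), and stability both to make $(\pi_B)_\ast$ preserve these squares and to run a five-lemma argument upgrading ``iso on three corners'' to ``iso on the fourth.'' That last step is correct but needs a sentence: in a stable derivator, cocartesianness identifies the horizontal cofibers of the two squares, giving a map of fiber sequences which is invertible on two of three terms, hence on the third. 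The separate ``induction on $B$'' you mention is unnecessary: a single induction on $A$ with $B$ universally quantified already suffices.

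The genuine gap is the bottom-right corner, and it is more serious than bookkeeping. First, the slice direction is wrong: since $j\colon A_i\hookrightarrow A_{i+1}$ is a cosieve, $a_{i+1}$ is a source, so $A_i/a_{i+1}=\emptyset$; the object that actually governs $(\pi_{A_{i+1}})_!(k_\ast k^\ast X)$ is the coslice $a_{i+1}/A_i$. Concretely (check $A_{i+1}=\ulcorner$ or the ``claw'' with three arrows out of a source) one finds something like $(\pi_{A_{i+1}})_!(k_\ast Z)\cong\mathrm{cofib}\bigl(\hocolim^{a_{i+1}/A_i}\pi^\ast Z\to Z\bigr)$, not a bare ``shift of a colimit.'' This creates two problems for your induction on $i$: the coslice $a_{i+1}/A_i$ can have \emph{more} objects than $A_{i+1}$, so counting objects is not a well-founded measure (one would need, say, the number of nondegenerate simplices in the nerve); and the expression involves a cofiber, i.e.\ a colimit over $\ulcorner$, whose commutation with $(\pi_B)_\ast$ is itself an instance of the theorem and need not precede $A_{i+1}$ in any simple ordering. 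To break the circularity one must first establish the base cases by hand---that binary coproducts commute with limits (semiadditivity) and that $\Sigma$ commutes with $(\pi_B)_\ast$ (because $\Sigma$ and $\Omega$ are inverse equivalences, and $\Omega$ commutes being a right adjoint), whence the pushout case $\ulcorner$ follows from your own five-lemma step---and only then run the general induction with a carefully chosen measure. So the idea is workable, but the ``suitable dimension argument'' you defer is precisely where the content lives, and as stated the induction does not close.
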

\begin{proof}
Let \D be a stable derivator and let $A\in\cCat$. Denoting by $\pi_A\colon A\to\bbone$ the unique functor, there are defining adjunctions 
\[
(\colim_A,\pi_A^\ast)\colon\D^A\rightleftarrows\D\qquad\text{and}\qquad (\pi_A^\ast,\mathrm{lim}_A)\colon\D\rightleftarrows\D^A,
\]
and these exhibit $\colim_A,\mathrm{lim}_A\colon\D^A\to\D$ as exact morphisms of stable derivators \cite[Cor.~9.9]{groth:can-can}. Hence, by \cite[Thm.~7.1]{ps:linearity}, $\colim_A$ preserves homotopy finite limits and $\lim_A$ preserves homotopy finite colimits.
\end{proof}

For the converse to this theorem we collect the following lemma. 

\begin{lem}\label{lem:lim-comm}
Let \D be a derivator such that homotopy finite colimits and homotopy finite limits commute in \D.
\begin{enumerate}
\item The derivator \D is pointed.
\item The morphisms $\cof\colon\D^{[1]}\to\D^{[1]}$ and $C\colon\D^{[1]}\to\D$ preserve homotopy finite limits.
\item The morphism $\fib\colon\D^{[1]}\to\D^{[1]}$ and $F\colon\D^{[1]}\to\D$ preserve homotopy finite colimits.
\end{enumerate}
\end{lem}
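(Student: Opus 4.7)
The plan is as follows. For (i), I invoke \autoref{prop:ptd-comm}: a derivator is pointed if and only if empty colimits commute with empty limits. Since the empty category is (strictly) homotopy finite, the hypothesis specializes to this commutativity, so $\D$ is pointed.

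For (ii), factor the cone morphism $C\colon\D^{[1]}\to\D$ as
$$\D^{[1]}\xto{u_\ast}\D^\ulcorner\xto{v_!}\D^\square\xto{(1,1)^\ast}\D,$$
where $u\colon[1]\to\ulcorner$ is the sieve including the horizontal edge of the span (so that $u_\ast$ extends by zero at $(0,1)$) and $v\colon\ulcorner\to\square$ is the inclusion of the span (so that $v_!$ takes the pushout at $(1,1)$). Each factor preserves homotopy finite limits: $u_\ast$ is a right adjoint and preserves all limits; $(1,1)^\ast$ preserves all limits because limits in diagram derivators are computed pointwise; and $v_!$ preserves homotopy finite limits because it is the identity at objects of $\ulcorner$, while at $(1,1)$ its value is the colimit over the homotopy finite category $\ulcorner$, which commutes with any homotopy finite limit by hypothesis. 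An analogous factorization, with the final $(1,1)^\ast$ replaced by restriction along the inclusion of $[1]$ as the right column of $\square$, shows that $\cof\colon\D^{[1]}\to\D^{[1]}$ preserves homotopy finite limits.

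For (iii) I appeal to duality: the hypothesis is self-dual under $(-)\op$, and $\fib$, $F$ for $\D$ correspond to $\cof$, $C$ for $\D\op$, so the statement follows formally from (ii).

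The principal point is the verification that $v_!$ preserves homotopy finite limits. Thanks to the pointwise computation of limits in $\D^\square$, this reduces to the commutativity at $(1,1)$ of an $\ulcorner$-indexed colimit with homotopy finite limits; this is immediate from the hypothesis since $\ulcorner$ is itself homotopy finite. Everything else is then either adjointness or the elementary fact that limits in diagram derivators are pointwise.
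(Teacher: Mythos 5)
Your proof is correct and follows essentially the same route as the paper's: part (i) by specializing the hypothesis to the empty category and invoking \autoref{prop:ptd-comm}, part (ii) via the identical factorization of the cone through a right-extension-by-zero $\D^{[1]}\to\D^\ulcorner$, the pushout $(i_\ulcorner)_!\colon\D^\ulcorner\to\D^\square$, and a restriction, with the crux being the pointwise reduction showing $(i_\ulcorner)_!$ preserves homotopy finite limits because $\colim_\ulcorner$ commutes with them by hypothesis; and part (iii) by duality. The paper packages the pointwise reduction by citing the relevant detection and composition lemmas from \cite{groth:can-can}, but the argument you give is the same one.
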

\begin{proof}
By assumption on \D, empty colimits and empty limits commute and this implies that \D is pointed (\autoref{prop:ptd-comm}). Hence, by duality, it remains to take care of the second statement. Denoting by $i\colon[1]\to\ulcorner$ the sieve classifying the horizontal morphism $(0,0)\to (1,0)$ and by $k'\colon[1]\to\square$ the functor classifying the vertical morphism $(1,0)\to (1,1)$, the cofiber morphism is given by
\begin{equation}\label{eq:cof}
\cof\colon\D^{[1]}\stackrel{i_\ast}{\to}\D^\ulcorner\stackrel{(i_\ulcorner)_!}{\to}\D^\square\stackrel{(k')^\ast}{\to}\D^{[1]}.
\end{equation}
Since the morphisms $i_\ast$ and $(k')^\ast$ are right adjoints, they preserve arbitrary right Kan extensions, hence homotopy finite limits. By assumption on \D, \cite[Prop.~3.9]{groth:can-can}, and \cite[Lem.~4.9]{groth:can-can}, also the morphism $(i_\ulcorner)_!$ preserves homotopy finite limits, and hence so does $\cof$ by \cite[Prop.~5.2]{groth:can-can}. An additional composition with the continuous evaluation morphism $1^\ast\colon\D^{[1]}\to\D$ establishes the corresponding result for $C$. 
\end{proof}

Given a pointed derivator \D, the derivator $\D^\square=\D^{[1]\times[1]}$ admits cone and fiber morphisms in the first and the second coordinate, and these are respectively denoted by
\[
C_1, C_2\colon\D^\square\to\D^{[1]}\qquad\mbox{and}\qquad F_1,F_2\colon\D^\square\to\D^{[1]}.
\]
Since these morphisms are pointed, for $X\in\D^\square$ there is by \cite[Construction~9.7]{groth:can-can} a canonical comparison map
\begin{equation}\label{eq:cof-fib-comm}
C(F_2 X)\to F(C_1X).
\end{equation}

\begin{cor}\label{cor:lim-comm}
Let \D be a derivator in which homotopy finite colimits and homotopy finite limits commute. Then \D is pointed and the canonical transformations \eqref{eq:cof-fib-comm} are isomorphisms for every $X\in\D^\square$.
\end{cor}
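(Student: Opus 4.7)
Pointedness of \D is immediate from the first part of \autoref{lem:lim-comm}, so only the invertibility of the comparison maps \eqref{eq:cof-fib-comm} needs argument. My plan is to realize \eqref{eq:cof-fib-comm} as an iterated Beck--Chevalley mate measuring the commutation, in one coordinate of $\square = [1]\times[1]$, of the homotopy finite colimit $C$ with, in the other coordinate, the homotopy finite limit $F$, and to then appeal to the preservation properties supplied by \autoref{lem:lim-comm}.

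Concretely, I would first unwind $C_1$ and $F_2$ via the calculus of parametrized Kan extensions \cite[Cor.~2.6]{groth:ptstab}: running the formula \eqref{eq:cof} in the first coordinate of $\square$ while leaving the second free presents $C_1\colon \D^\square \to \D^{[1]}$ as a composite of restrictions and Kan extensions acting only in coordinate~$1$, and the dual formula for $\fib$ applied in the second coordinate does the same for $F_2$. Unpacking the definition of the canonical comparison from \cite[Construction~9.7]{groth:can-can} and applying the pasting calculus of mates \cite[Lem.~4.8]{groth:can-can}, the map in \eqref{eq:cof-fib-comm} factors as the composite of the component commutation mates obtained by sliding each step of $C_1$ past each step of $F_2$.

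Each such component mate pairs a restriction or Kan extension in coordinate~$1$ with one in coordinate~$2$, along functors between the homotopy finite categories $[1]$, $\ulcorner$, and $\square$. By \autoref{lem:lim-comm}, the $\cof$-factors of $C_1$ preserve homotopy finite limits and, dually, the $\fib$-factors of $F_2$ preserve homotopy finite colimits, so by \cite[Lem.~4.9]{groth:can-can} each component mate is invertible. Pasting these invertible mates yields the desired isomorphism $C(F_2 X) \toiso F(C_1 X)$.

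The main obstacle, and the only real work, is the purely formal bookkeeping needed to verify that the canonical comparison of \cite[Construction~9.7]{groth:can-can} genuinely coincides with the iterated Beck--Chevalley mate obtained in this way; once this identification is in place, invertibility follows mechanically from the preservation properties established in \autoref{lem:lim-comm}.
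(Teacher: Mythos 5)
Your proposal is correct and follows essentially the same logic as the paper, whose proof is the single line ``This is immediate from \autoref{lem:lim-comm}.'' What you have done is spell out what ``immediate'' means: the comparison \eqref{eq:cof-fib-comm} from \cite[Construction~9.7]{groth:can-can} is precisely the mate measuring whether the (pointed, cocontinuous) morphism $C$ preserves the homotopy finite limit $F$ in the orthogonal coordinate, and \autoref{lem:lim-comm}(ii) supplies exactly that preservation, with the bookkeeping you flag being the routine verification (via \cite[Lem.~4.9]{groth:can-can} and the pasting calculus) that this is the same mate.
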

\begin{proof}
This is immediate from \autoref{lem:lim-comm}.
\end{proof}

As we show next, this property already implies that the derivator is stable. Together with \autoref{thm:stable-lim-I} we thus obtain the following more conceptual characterization of stability.

\begin{thm}\label{thm:stable-lim-II}
A derivator is stable if and only if homotopy finite colimits and homotopy finite limits commute.
\end{thm}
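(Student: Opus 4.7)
The proof splits into two directions. The forward direction, that stability implies the commutation of homotopy finite colimits and limits, is already established by \cref{thm:stable-lim-I}; so the work lies in the converse.

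Assume henceforth that homotopy finite colimits and limits commute in $\D$. By \cref{cor:lim-comm}, $\D$ is pointed, and by \cref{lem:lim-comm} the cone morphism $C \colon \D^{[1]} \to \D$ preserves homotopy finite limits, in particular pullbacks; dually, the fiber morphism $F \colon \D^{[1]} \to \D$ preserves homotopy finite colimits, in particular pushouts. The plan is to exploit these preservations to exhibit natural isomorphisms $\Sigma F \cong C$ and $F \cong \Omega C$ of morphisms $\D^{[1]} \to \D$, so that \cref{prop:stable-known-mod} yields stability.

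For the isomorphism $\Sigma F \cong C$, given $f \colon x \to y$ I consider the cospan in the derivator $\D^{[1]}$ whose middle object is $\id_y = (y \xrightarrow{\id} y)$, with one leg coming from $f = (x \to y)$ (via the morphism whose source and target components are $f$ and $\id_y$ respectively) and the other from $0 = (0 \to 0)$ (both components zero). Since limits in $\D^{[1]}$ are computed pointwise, the pullback of this cospan equals $(Ff \to 0) \in \D^{[1]}$, whose image under $C$ is $\Sigma Ff$. On the other hand, preservation of pullbacks by $C$ identifies $\Sigma Ff$ with the pullback in $\D$ of the cospan $Cf \to 0 \leftarrow 0$ (using $C(\id_y) = 0$ and $C(0 \to 0) = 0$), which equals $Cf$ since $0$ is a zero object. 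Dualizing by considering a suitable span in $\D^{[1]}$ with middle object $\id_x$ and applying preservation of pushouts by $F$ yields the second isomorphism $Ff \cong \Omega Cf$.

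The main subtlety I anticipate is verifying that the isomorphisms obtained from the limit- and colimit-preservation mates agree with the canonical comparison maps $\Sigma F \to C$ and $F \to \Omega C$ of \eqref{eq:sigma-f-c} appearing in \cref{prop:stable-known-mod}. In any case, the mere existence of such natural isomorphisms already forces stability: specializing $\Sigma F \cong C$ at $f = (0 \to x)$ gives a natural isomorphism $\Sigma \Omega \cong \id_\D$, and specializing $F \cong \Omega C$ at $f = (x \to 0)$ gives $\Omega \Sigma \cong \id_\D$, so $(\Sigma, \Omega)$ is an equivalence and \cref{thm:stable-known} concludes the proof.
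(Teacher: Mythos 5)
Your proof is correct and follows essentially the same strategy as the paper's, differing only in the intermediate packaging. Both proofs start from \cref{thm:stable-lim-I} for the forward direction, both extract pointedness and the preservation statements of \cref{lem:lim-comm}/\cref{cor:lim-comm} from the commuting hypothesis, and both ultimately reduce to producing natural isomorphisms $\Sigma\Omega\cong\id$ and $\Omega\Sigma\cong\id$ so that \cref{thm:stable-known} applies. The difference: the paper specializes the isomorphism $CF_2 \cong FC_1$ of \cref{cor:lim-comm} directly to the particular square $X(x)=(i_\lrcorner)_!\pi_\lrcorner^\ast x$ and computes both sides; you instead establish the natural isomorphisms $\Sigma F\cong C$ and $F\cong\Omega C$ for arbitrary $f\in\D^{[1]}$ (by applying preservation of pullbacks by $C$ and pushouts by $F$ to a cospan/span in $\D^{[1]}$) and then specialize at $(0\to x)$ and $(x\to 0)$, which is exactly the route of \cref{prop:stable-known-mod}. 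Your route has the merit of making explicit the connection to item (vii) of \cref{thm:stable-lim-III}, but it requires a touch more care in construction.

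Two small points to tighten. First, you are right that the isomorphisms produced by the mates need not coincide with the canonical maps of \eqref{eq:sigma-f-c}, and also right that this does not matter: the implication $(\mathrm{ii})\Rightarrow(\mathrm{i})$ in \cref{prop:stable-known-mod} only uses the existence of \emph{some} natural isomorphisms $\Sigma F\cong C$ and $F\cong\Omega C$, as its proof just evaluates them at $1_!(x)$ and $0_\ast(x)$. Second, the cospan $f\to\id_y\leftarrow 0$ must be constructed as a coherent object of $\D^{[1]\times\lrcorner}$ functorially in $f$, which you gloss over; this can be done (for instance using the unit of $1^\ast\dashv 1_\ast\cong\pi_{[1]}^\ast$ together with right extension by zero along a sieve), but the paper's choice of $X(x)$ has the convenience of being given by a one-line Kan extension formula.
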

\begin{proof}
By \autoref{thm:stable-lim-I} it suffices to show that a derivator \D is stable as soon as homotopy finite colimits and homotopy finite limits commute in \D. Such a derivator is pointed and for every $X\in\D^\square$ the canonical morphism
\begin{equation}\label{eq:stable-lim-II}
C(F_2X)\toiso F(C_1X)
\end{equation}
is an isomorphism (\autoref{cor:lim-comm}). For every $x\in\D$ we consider the square
\[
X=X(x)=(i_\lrcorner)_!\pi_\lrcorner^\ast x\in\D^\square.
\]
The morphism $\pi_\lrcorner^\ast\colon\D\to\D^\lrcorner$ forms constant cospans. Since $i_\lrcorner\colon\lrcorner\to \square$ is a cosieve, $(i_\lrcorner)_!$ is left extension by zero \cite[Prop.~3.6]{groth:ptstab} and the diagram $X\in\D^\square$ looks like
\[
\xymatrix{
0\ar[r]\ar[d]&x\ar[d]^-\id\\
x\ar[r]_-\id&x.
}
\]
We calculate $CF_2(X)\cong C(\Omega x\to 0)\cong \Sigma\Omega x$ and $FC_1(X)\cong F(x\to 0)\cong x$, showing that the canonical isomorphism \eqref{eq:stable-lim-II} induces a natural isomorphism $\Sigma\Omega\toiso\id$. Using constant spans instead one also constructs a natural isomorphism $\id\toiso\Omega\Sigma$, showing that $\Sigma,\Omega\colon\D\to\D$ are equivalences. It follows from \autoref{thm:stable-known} that \D is stable.
\end{proof}

It is now straightforward to obtain the following variant of this theorem. We recall from \cite[\S9]{groth:can-can} that \textbf{left homotopy finite left Kan extensions} are left Kan extensions along functors $u\colon A\to B$ such that the slice categories $(u/b),$ $b\in B,$ admit a homotopy final functor $C_b\to(u/b)$ from a homotopy finite category~$C_b$. The point of this notion is that right exact morphisms of derivators preserve left homotopy finite left Kan extensions \cite[Thm.~9.14]{groth:can-can}.

\begin{thm}\label{thm:stable-lim-III}
The following are equivalent for a derivator \D.
\begin{enumerate}
\item The derivator \D is stable.\label{item:sl1}
\item Homotopy finite colimits and homotopy finite limits commute in \D.\label{item:sl2}
\newitemgroup Left homotopy finite left Kan extensions and arbitrary right Kan extensions commute in \D.\label{item:sl3a}
\itemgroup Arbitrary left Kan extensions and right homotopy finite right Kan extensions commute in \D.\label{item:sl3b}
\newitemgroup Every left exact morphism $\D^A\to\D^B,A,B\in\cCat,$ preserves left homotopy finite left Kan extensions.\label{item:sl4a}
\itemgroup Every right exact morphism $\D^A\to\D^B,A,B\in\cCat,$ preserves right homotopy finite right Kan extensions.\label{item:sl4b}
\stopitemgroup
\item The derivator \D is pointed and $C\colon\D^{[1]}\to\D$ preserves right homotopy finite right Kan extensions.\label{item:sl5}
\item The derivator \D is pointed and $C\colon\D^{[1]}\to\D$ preserves homotopy finite limits.\label{item:sl6}
\item The derivator \D is pointed and $C\colon\D^{[1]}\to\D$ preserves $F$.\label{item:sl7}
\end{enumerate}
\end{thm}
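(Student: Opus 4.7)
The plan is to augment \autoref{thm:stable-lim-II}, which gives (i)~$\Leftrightarrow$~(ii), by proving the cycle
\[
\text{(i)}\Rightarrow\text{(iv)}\Rightarrow\text{(iii)}\Rightarrow\text{(v)}\Rightarrow\text{(vi)}\Rightarrow\text{(vii)}\Rightarrow\text{(i)}.
\]
Most of the middle steps are routine specializations of existing technology; the substance is concentrated at the two endpoints.

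For (i)~$\Rightarrow$~(iv.a), (iv.b), I would cite \cite[Thm.~7.1]{ps:linearity}, which guarantees that in any stable derivator the notions of left exact, right exact, and exact morphism coincide. Combined with \cite[Thm.~9.14]{groth:can-can} (right exact morphisms preserve left homotopy finite left Kan extensions), this yields (iv.a); the dual statement is (iv.b). The passage (iv)~$\Rightarrow$~(iii) is then immediate: every right Kan extension $v_\ast$ is a right adjoint, hence left exact, so (iv.a) applied to $v_\ast$ says it preserves left homotopy finite left Kan extensions, which by \cite[Lem.~4.8]{groth:can-can} is precisely the commutation statement (iii.a). Pointedness of \D, needed throughout (v)--(vii), drops out of (iii) by specialization to the empty functor via \autoref{prop:ptd-comm}.

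For (iii.b)~$\Rightarrow$~(v), I would factor $C$ through~\eqref{eq:cof} composed with evaluation,
\[
C\colon\D^{[1]}\xrightarrow{i_\ast}\D^\ulcorner\xrightarrow{(i_\ulcorner)_!}\D^\square\xrightarrow{1^\ast}\D,
\]
in which the outer morphisms are a right adjoint and a restriction, hence automatically commute with all right Kan extensions, while the middle left Kan extension $(i_\ulcorner)_!$ commutes with right homotopy finite right Kan extensions by (iii.b). The chain (v)~$\Rightarrow$~(vi)~$\Rightarrow$~(vii) is routine: a homotopy finite limit is the right Kan extension along $\pi_A\colon A\to\bbone$ for $A$ homotopy finite (whose unique nonempty slice, $A$ itself, trivially admits the identity as a homotopy initial functor), and $F$ decomposes as left extension by zero along a cosieve (preserved by any pointed morphism) followed by the homotopy finite pullback $\lim_\lrcorner$.

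The main obstacle is (vii)~$\Rightarrow$~(i). I would unpack ``$C$ preserves $F$'' as the assertion that the canonical comparison~\eqref{eq:cof-fib-comm}
\[
C(F_2 X)\to F(C_1 X)
\]
is invertible for every $X\in\D^\square$. Evaluating at $X=(i_\lrcorner)_!\pi_\lrcorner^\ast x$ (the square with $(0,0)\mapsto 0$ and the other three corners identified with $x$ via identities, already used in the proof of \autoref{thm:stable-lim-II}), a direct computation gives $F_2(X)\cong(\Omega x\to 0)$ and $C_1(X)\cong(x\to 0)$, so the comparison reads as the counit $\Sigma\Omega x\toiso x$. Applied instead to the dual square $X=(i_\ulcorner)_\ast\pi_\ulcorner^\ast x$, it becomes the unit $x\toiso\Omega\Sigma x$. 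Both being invertible exhibits the suspension-loop adjunction $(\Sigma,\Omega)$ as an equivalence, and \autoref{thm:stable-known} then delivers stability.
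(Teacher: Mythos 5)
Your proof is correct and follows essentially the same approach as the paper: it uses \autoref{thm:stable-lim-II} as the engine, derives (iv) from stability via ``left exact $=$ right exact'' in the stable setting plus the preservation theorem for (left/right) homotopy finite Kan extensions, descends to (iii) via right adjointness, and closes the loop by evaluating the comparison $C(F_2X)\to F(C_1X)$ at the same pair of squares $(i_\lrcorner)_!\pi_\lrcorner^\ast x$ and $(i_\ulcorner)_\ast\pi_\ulcorner^\ast x$ that the paper uses in \autoref{thm:stable-lim-II}. The only organizational difference is that you prove (iii.b)~$\Rightarrow$~(v) directly by factoring $C$ through $i_\ast$, $(i_\ulcorner)_!$, and a restriction, whereas the paper returns to (i) first and invokes left exactness of $C$; both work. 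One small nit: the fact that left exact and right exact morphisms coincide in the stable setting is cited in this paper as \cite[Prop.~9.8]{groth:can-can} (together with \cite[Prop.~4.3]{groth:ptstab} to pass to shifted derivators), not \cite[Thm.~7.1]{ps:linearity}, which concerns exact morphisms preserving homotopy finite (co)limits.
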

\begin{proof}
If \D is stable, then also the shifted derivators $\D^A,A\in\cCat,$ are stable \cite[Prop.~4.3]{groth:ptstab}. Consequently, every left exact morphism $\D^A\to\D^B$ is also right exact \cite[Prop.~9.8]{groth:can-can} and it hence preserves left homotopy finite left Kan extensions \cite[Thm.~9.14]{groth:can-can}. This and a dual argument shows that statement~\ref{item:sl1} implies statements~\ref{item:sl4a} and~\ref{item:sl4b}. Since right Kan extension morphisms are right adjoint morphisms and hence left exact, the implications~\ref{item:sl4a} implies~\ref{item:sl3a} and~\ref{item:sl3a} implies~\ref{item:sl2} are immediate. Moreover,~\ref{item:sl2} implies~\ref{item:sl1} by \autoref{thm:stable-lim-II}, and, by duality, it remains to incorporate the three final statements. Statement~\ref{item:sl1} implies statement~\ref{item:sl5} since $C$ is left exact in this case and it hence preserves right homotopy finite right homotopy Kan extensions \cite[Thm.~9.14]{groth:can-can}. The implications~\ref{item:sl5} implies~\ref{item:sl6} and \ref{item:sl6} implies~\ref{item:sl7} being trivial, it remains to show that~\ref{item:sl7} implies~\ref{item:sl1} which is already taken care of by the proof of \autoref{thm:stable-lim-II}.
\end{proof}

There are, of course, various additional minor variants of the characterizations in \autoref{thm:stable-lim-III} obtained, for example, by replacing $C$ by $\cof\colon\D^{[1]}\to\D^{[1]}$. 

\begin{rmk}\label{rmk:interpretation}
A typical slogan is that spectra are obtained from pointed topological spaces if one forces the suspension to become an equivalence. This slogan is made precise by \autoref{thm:stable-known} and the fact that the derivator of spectra is the stabilization of the derivator of pointed topological spaces \cite{heller:stable}. \autoref{thm:stable-known} and \autoref{thm:stable-lim-III} make precise various additional slogans saying, for instance, that spectra are obtained from spaces or pointed spaces by forcing certain colimit and limit type constructions to commute. We illustrate this by two examples.
\begin{enumerate}
\item If one forces homotopy finite colimits and homotopy finite limits to commute in the derivator of spaces, then one obtains the derivator of spectra. 
\item If one forces partial cones and partial fibers of squares to commute in the derivator of pointed spaces, then this yields the derivator of spectra.
\end{enumerate}
\end{rmk}

\begin{rmk}\label{rmk:stable-rep-triv}
The phenomenon that certain colimits and limits commute is well-known from ordinary category theory. To mention an instance, let us recall that filtered colimits are exact in Grothendieck abelian categories, i.e., filtered colimits and finite limits commute in such categories. Additional such statements hold in locally presentable categories, Grothendieck topoi, and algebraic categories. 

Now, the phenomenon of stability is invisible to ordinary category theory; in fact, a represented derivator is stable if and only if the representing category is trivial (this follows from \autoref{thm:stable-known} since the suspension morphism is trivial in pointed represented derivators). As a consequence the commutativity statements in \autoref{thm:stable-lim-III} have no counterparts in ordinary category theory.
\end{rmk}

\section{Stability versus absoluteness}
\label{sec:galois}

The close family resemblance between \cref{prop:ptd-comm,thm:stable-lim-III} suggests the following definition.

\begin{defn}
  Let $\Phi$ be a class of functors between small categories.
  A derivator \D is \textbf{left $\Phi$-stable} if for every $(u\colon A\to B)\in\Phi$, left Kan extensions along $u$ in $\D$ commute with arbitrary right Kan extensions.
  Dually, $\D$ is \textbf{right $\Phi$-stable} if $\D\op$ is left $\Phi$-stable, i.e.\ right Kan extensions along each $u\in \Phi$ in $\D$ commute with arbitrary left Kan extensions.
  If $\D$ is left (resp.\ right) $\Phi$-stable, we say that $\Phi$ is left (resp.\ right) \textbf{\D-absolute}.
\end{defn}

We may take $\Phi$ to be a class of categories instead of functors, in which case we identify a category $A$ with the unique functor $A\to\bbone$.
In the next section we will show that left $\Phi$-stability coincides with right $\Phi\op$-stability.

\begin{egs}
  A derivator \D is pointed if and only if it is left $\emptyset$-stable, if and only if it is right $\emptyset$-stable, and if and only if it is left stable for the class of cosieves, if and only if it is right stable for the class of sieves.
  Similarly, \D is stable if and only if it is left stable for the class of homotopy finite categories, if and only if it is right stable for the same class, if and only if it is left stable for the class of left homotopy finite functors, if and only if it is right stable for the class of right homotopy finite functors.
\end{egs}

This notion of relatively stable derivators allows us to construct the following Galois correspondence.

\begin{con}
  Given a class $\Phi$ of functors between small categories, we write $\stabl(\Phi)$ for the collection of left $\Phi$-stable derivators.
  Dually, given a collection $\Upsilon$ of derivators, we write $\absl(\Upsilon)$ for the class of left $\Upsilon$-absolute functors (i.e.\ functors that are left \D-absolute for all $\D\in\Upsilon$).
  Then $\stabl$ and $\absl$ are a Galois correspondence (a contravariant adjunction of partial orders) between the classes of functors and collections of derivators.
  In particular, we have
  \[ \Phi \subseteq \absl(\Upsilon) \iff \Upsilon \subseteq \stabl(\Phi) \]
  and
  \[ \Phi \subseteq \absl(\stabl(\Phi)) \qquad \Upsilon \subseteq \stabl(\absl(\Upsilon)) \]
  \[ \stabl(\Phi) = \stabl(\absl(\stabl(\Phi))) \qquad \absl(\Upsilon) = \absl(\stabl(\absl(\Upsilon))). \]
  Dually, we have $\stabr$ and $\absr$.
\end{con}

\begin{egs}
  \cref{prop:ptd-comm} can now be restated by saying that $\stabl(\{\emptyset\})$ and $\stabr(\{\emptyset\})$ are the collection $\mathsf{POINT}$ of pointed derivators, while $\absl(\mathsf{POINT})$ contains all cosieves and $\absr(\mathsf{POINT})$ contains all sieves.
  In particular, $\mathsf{POINT}$ is a fixed point of both Galois correspondences.
  Similarly, \cref{thm:stable-lim-III} can be restated by saying that $\stabl(\mathsf{FIN})$ and $\stabr(\mathsf{FIN})$, for $\mathsf{FIN}$ the class of homotopy finite categories, are both the collection $\mathsf{STABLE}$ of stable derivators; while $\absl(\mathsf{STABLE})$ contains all left homotopy finite functors and $\absr(\mathsf{STABLE})$ contains all right homotopy finite functors.

  The cone functor $C\colon\D^{[1]} \to \D$ is not a colimit (though it is a weighted colimit, in the sense to be defined in \cref{con:wcolim}, for a suitable enrichment), so we cannot consider ``$\stabl(\{C\})$''.
  However, if the pushout functor $\D^{\ulcorner} \to \D$ is continuous, then so is $C$, since $C$ is the composite of a pushout, a right Kan extension, and an evaluation morphism.
  Thus, we can say that $\mathsf{STABLE} = \stabl(\{\emptyset,\ulcorner\})$ and similarly $\mathsf{STABLE} = \stabr(\{\emptyset,\lrcorner\})$.
\end{egs}

\begin{eg}
  Of course, $\stabl(\emptyset)$ and $\stabr(\emptyset)$ are the collection $\mathsf{DERIV}$ of all derivators, while $\absl(\emptyset)$ and $\absr(\emptyset)$ are the class $\mathsf{FUNC}$ of all functors.
  However, $\absl(\mathsf{DERIV})$ and $\absr(\mathsf{DERIV})$ are nonempty; for instance, $\absl(\mathsf{DERIV})$ contains all left adjoint functors, $\absr(\mathsf{DERIV})$ all right adjoint functors, and they both include the splitting of idempotents.
  On the other hand, $\stabl(\mathsf{FUNC})$ and $\stabr(\mathsf{FUNC})$ include only the trivial derivator, by~\cite[Remark 9.4]{ps:linearity}.
\end{eg}

\begin{eg}
  Let $\Phi=\mathsf{FINDISC}$ be the class of finite discrete categories.
  Since $\emptyset\in\mathsf{FINDISC}$, any left or right $\Phi$-stable derivator is pointed.
  It is easy to see that $\stabl(\mathsf{FINDISC}) = \stabl(\{\emptyset,2\})$, where $2$ denotes the discrete category with two objects, and similarly for $\stabr$.

  In fact, we have $\stabl(\mathsf{FINDISC}) = \stabr(\mathsf{FINDISC}) = \mathsf{SEMIADD}$, the collection of semiadditive derivators.
  For since $\D^2 \simeq\D\times\D$ by one of the derivator axioms, the left and right Kan extensions along $2\to \bbone$ are just binary coproducts and products.
  Then if $\D$ is pointed and binary coproducts preserve all limits, then in particular they preserve binary products, which means that
  \[ (X\times Z) + (Y\times W) \cong (X+Y)\times (Z+W) \]
  canonically.
  Taking $Y=Z=0$, we see that $X+W \cong X\times W$ canonically, so that $\D$ is semiadditive.
  Conversely, if $\D$ is semiadditive, then the coproduct and product functors $\D\times \D\to\D$ coincide, and in particular the coproduct is a right adjoint and so preserves all limits.
  Thus $\D$ is left $\mathsf{FINDISC}$-stable if and only if it is semiadditive, and dually for right $\mathsf{FINDISC}$-stability.
\end{eg}

There are a number of natural questions suggested by this phrasing of the characterization theorems:
\begin{enumerate}
\item By definition, $\D$ is left $u$-stable if and only if $u_!\colon \D^A\to \D^B$ is continuous.
  But a continuous functor is crying out to be a right adjoint, for instance if there is an adjoint functor theorem.
  General derivators have no adjoint functor theorem, but does $u_!$ happen to be a right adjoint anyway?
\item \cref{prop:ptd-comm,thm:stable-lim-III} are self-dual, and in particular $\mathsf{POINT}$ and $\mathsf{STABLE}$ are fixed points of both Galois connections.
  Is there an abstract explanation for this?
\item We have seen that interesting collections of derivators like $\mathsf{POINT}$ and $\mathsf{STAB}$ can be generated as $\stabl(\Phi)$ for very small classes $\Phi$ of functors such as $\{\emptyset\}$ and $\{\emptyset,\ulcorner\}$.
  Can they also be generated as $\stabl(\absl(\Upsilon))$ for ``manageable'' collections $\Upsilon$ of derivators?
  For instance, are there ``universal'' pointed or stable derivators that suffice to detect whether a given functor is absolute for all pointed or stable derivators?
\end{enumerate}

To attack these questions, we use the technology of enriched derivators and weighted limits.
We will see that it suffices to answer the first two questions positively, but it is not quite adequate for the third in general, although in particular cases the answer is yes.
In~\cite{gs:enriched} we will use a better technology to answer the third question positively in general as well.

\section{Enriched derivators}
\label{sec:enriched-derivators}

We begin by defining the basic notions of enriched derivators.  We freely make use of the language and techniques established in \cite{gps:additivity}, in particular the language of \emph{monoidal derivators} as it is developed in detail in \cite[\S3]{gps:additivity}. In that paper there is also a detailed discussion of \emph{two-variable adjunctions of derivators} \cite[\S\S8-9]{gps:additivity}.

A monoidal derivator \V is a pseudo-monoid object in $\cDER$ (the 2-category of derivators and pseudonatural transformations) such that the monoidal structure $\otimes\colon\V\times\V\to\V$ preserves colimits separately in both variables. The pseudo-monoid structure precisely amounts to a lift of $\V\colon\cCat\op\to\cCAT$ against the forgetful functor from the $2$-category of monoidal categories, strong monoidal functors, and monoidal transformations. The resulting monoidal structures are denoted by $(\V(A),\otimes_A,\lS_A)$.

We will also have occasion to consider the following weaker notions.

\begin{defn}
  A \textbf{left derivator} is a prederivator satisfying all the axioms of a derivator except the existence of right Kan extensions.
\end{defn}

A morphism of left derivators is again a pseudonatural transformation, giving a 2-category \cLDER.
We can define two-variable morphisms of left derivators, and (separate) preservation of colimits, just as for derivators.

\begin{defn}
  A \textbf{monoidal left derivator} is a left derivator with a pseudo-monoid structure that preserves colimits separately in both variables.
  If \V is a monoidal left derivator, a \textbf{\V-module} is a cocontinuous pseudo-module, i.e.\ a left derivator \D with an action $\V\times\D\to \D$ that is coherently associative and unital and preserves colimits separately in both variables.
  We say that \D is a \textbf{\V-opmodule} if $\D\op$ is a \V-module.
  A \textbf{closed \V-module}, or \textbf{\V-enriched derivator}, is a \V-module whose action is part of a two-variable adjunction (hence, in particular, it is also a \V-opmodule).
\end{defn}

Now recall that derivator morphisms of two variables come in three different forms; see \cite[\S3 and \S5]{gps:additivity}. We right away specialize to the situation of an action as above.
\begin{enumerate}
\item In the \emph{internal form} $\otimes_A\colon\V(A)\times\D(A)\to\D(A)$ which naively is given by $(W\otimes_A X)_a=W_a\otimes X_a$, where $\otimes\colon\V(\bbone)\times\D(\bbone)\to\D(\bbone)$ denotes the underlying functor of two variables.
\item In the \emph{external form} $\otimes\colon\V(A)\times\D(B)\to\D(A\times B)$, which we think of as being defined by $(W\otimes X)_{a,b}=W_a\otimes X_b$.
\item Finally, in the \emph{canceling form} $\otimes_{[A]}\colon\V(A\op)\times\D(A)\to\D(\bbone)$ which is obtained from the external form by composing it with the coend functor 
\[
\int^A\colon\D(A\op\times A)\to\D(\bbone).
\]
For the notion of (co)ends in derivators we refer to \cite[\S5 and Appendix~A]{gps:additivity}.
\end{enumerate}
Note the different notation used for these three variants; the notation for internal versions was already used for the monoidal categories $(\V(A),\otimes_A,\lS_A)$.


\begin{eg}
Every monoidal left derivator is, of course, a module over itself.
If it is a closed module over itself, we call it a \textbf{closed monoidal left derivator}.

More generally, if \V is a monoidal left derivator, then any shift $\V^A$ is also a \V-module.
\end{eg}

We also have the following universal construction:

\begin{con}
  For any left derivators $\D,\E$, define $\pdh(\D,\E)$ by
  \[\pdh(\D,\E)(A) = \cDER(\D,\E^A)\]
  where a functor $u:A\to B$ induces the restriction functor
  \[ \pdh(\D,\E)(B) = \cDER(\D,\E^B) \to \cDER(\D,\E^A) = \pdh(\D,\E)(A) \]
  by postcomposition with $u^* \colon \E^B \to \E^A$.
  This makes $\pdh(\D,\E)$ into a left derivator, and indeed a derivator if \E is one; its Kan extension functors are also simply given by postcomposition.
  In this way \cLDER becomes a cartesian closed 2-category in an appropriate weak sense.
  In particular, $\pdh(\D,\D)$ is a pseudo-monoid under composition, and there is a canonical action $\pdh(\D,\D) \times \D \to \D$.
  However, this monoidal structure and action do not preserve colimits in the right variable, hence do not make \D into a $\pdh(\D,\D)$-module.

  Thus, we define a new left derivator $\ldh(\D,\E)$, for which $\ldh(\D,\E)(A)$ is the category of \emph{cocontinuous} morphisms $\D\to\E^A$.
  Since restriction and left Kan extension are cocontinuous morphisms, this is again a left derivator.
  The endomorphism object $\ldh(\D,\D)$, which we denote $\lend(\D)$, \emph{is} a monoidal left derivator under composition, and its action $\lend(\D)\times\D\to\D$ does make \D into an $\lend(\D)$-module.

  Explicitly, the external monoidal product of $F\colon\D\to\D^A$ and $G\colon\D\to\D^B$ is the morphism $GF\colon\D\to\D^{A\times B}$ whose component $\D(C) \to \D(C\times A\times B)$ is the composite $\D(C) \xto{F^C} \D(C\times A) \xto{G^{C\times A}} \D(C\times A\times B)$.
  Similarly, the external action of $F\colon\D\to\D^A$ on $X\in \D(B)$ is the image of $X$ under $F^B \colon\D(B) \to \D(B\times A)$.
  Both of these preserve colimits in both variables, on the left because colimits there are defined by postcomposition, and on the right because $F$ and $G$ preserve colimits.

  This construction is universal in the sense that if \V is a monoidal left derivator, then to make \D into a \V-module is equivalent to giving a cocontinuous monoidal morphism $\V\to\lend(\D)$.
  Specifically, the latter assigns to each $X\in \V(A)$ a morphism $\D\to \D^A$, which is the external tensor product with $X$.
  Monoidality of the morphism $\V\to\lend(\D)$ gives the associativity and unitality of the action, while its cocontinuity gives left cocontinuity of the action; right cocontinuity of the action comes from the fact that this morphism lands in $\lend(\D) = \ldh(\D,\D)$ rather than $\pdh(\D,\D)$.
\end{con}

Note that unlike $\pdh(\D,\E)$, the left derivator $\ldh(\D,\E)$ is not a derivator even if \E is: since limits and colimits do not in general commute, the limit in $\pdh(\D,\E)$ of cocontinuous morphisms need no longer be cocontinuous.
However, we can say;

\begin{lem}\label{thm:ldh-ran}
  If $u\colon A\to B$ is such that \E has right Kan extensions along $u$ that commute with arbitrary left Kan extensions, then so does $\ldh(\D,\E)$.
\end{lem}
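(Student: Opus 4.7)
My plan is to lift right Kan extensions along $u$ from $\E$ to $\ldh(\D,\E)$ by postcomposition, mirroring how $\pdh(\D,\E)$ inherits all its Kan extensions from $\E$, and to use the commutation hypothesis to verify that this postcomposition preserves cocontinuity.

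First I would recall that right Kan extensions along $u$ in $\pdh(\D,\E)$ (which is a derivator since $\E$ is) are computed by postcomposition: for $F\in\pdh(\D,\E)^C(A)=\cDER(\D,\E^{A\times C})$, the right Kan extension along $u$ is the composite $\D\xto{F}\E^{A\times C}\xto{(\id_C\times u)_\ast}\E^{B\times C}$, where the second factor is the parametrized right Kan extension in $\E$ (which exists by hypothesis together with the derivator axioms on $\E$). The goal is then to show these composites always land in the sub-left-derivator $\ldh(\D,\E)$, which will make them right Kan extensions there as well.

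The key step, and essentially the only non-formal one, is to verify that postcomposition with $(\id_C\times u)_\ast$ preserves cocontinuity. Given cocontinuous $F\colon\D\to\E^{A\times C}$ and any functor $w\colon K\to L$, I must check that $(\id_C\times u)_\ast\circ F$ still commutes with $w_!$. Cocontinuity of $F$ gives a canonical isomorphism expressing that $F$ commutes with $w_!$; applying $(\id_C\times u)_\ast$ on the outside and then invoking the hypothesis in its parametrized form, namely that the parametrized right Kan extension $(\id_C\times u)_\ast$ on $\E$ commutes with the parametrized left Kan extension along $w$, yields the desired commutation for $(\id_C\times u)_\ast\circ F$. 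This is precisely where the commutativity assumption is used.

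Given this closure property, the existence of right Kan extensions along $u$ in $\ldh(\D,\E)$ follows by restricting the corresponding adjunctions in $\pdh(\D,\E)$ along the fully faithful inclusion $\ldh(\D,\E)\hookrightarrow\pdh(\D,\E)$: the unit and counit 2-cells of the $\pdh$-adjunction restrict to $\ldh$, since all morphisms involved now lie among cocontinuous morphisms, and the Beck--Chevalley conditions are inherited. For commutativity with arbitrary left Kan extensions in $\ldh(\D,\E)$, I would use that these are likewise computed by postcomposition (which automatically preserves cocontinuity, since left Kan extensions always commute with each other), so that both $u_\ast$ and $w_!$ in $\ldh(\D,\E)$ act by postcomposition with the corresponding morphisms of $\E$; their commutation in $\E$, assumed as hypothesis, then transfers directly to $\ldh(\D,\E)$. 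The main obstacle is the preservation of cocontinuity in the key step, which requires carefully handling the canonical mates in the parametrized form; once that is in hand, everything else is bookkeeping about transferring structure from $\pdh$ to $\ldh$.
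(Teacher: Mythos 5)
Your proposal is correct and follows the same route as the paper's proof: right Kan extensions in $\pdh(\D,\E)$ are computed by postcomposition with (parametrized) $u_\ast$, and since the hypothesis makes $u_\ast$ cocontinuous, $\ldh(\D,\E)$ is closed under this postcomposition, and commutativity with left Kan extensions (also computed by postcomposition) is then immediate. The paper's argument is simply a terser version of yours; your additional care about the fully faithful inclusion and Beck--Chevalley conditions is implicit in the paper's phrasing.
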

\begin{proof}
  Right Kan extensions in $\pdh(\D,\E)$ are defined by postcomposition; if $u_*$ is cocontinuous then $\ldh(\D,\E)$ is closed under such postcomposition.
  Since left Kan extensions are also defined by postcomposition, commutativity follows.
\end{proof}

We now introduce the notion of weighted colimits.
First note that the internal, external, and canceling versions of morphisms of two-variables can be combined. In particular, given a monoidal derivator \V and $A,B,C\in\cCat$, there is the \textbf{(homotopy) tensor product of functors}
\[
\otimes_{[B]}\colon \V(A\times B\op)\times\V(B\times C\op)\stackrel{\otimes}{\to}\V(A\times B\op\times B\times C\op)\stackrel{\int^B}{\to}\V(A\times C\op),
\]
and also this operation enjoys associativity and unitality properties.

\begin{thm}[{\cite[Theorem~5.9]{gps:additivity}}]\label{thm:bicategory}
  If \V is a monoidal left derivator, then there is a bicategory $\cProf(\V)$ described as follows:
  \begin{itemize}
  \item Its objects are small categories.
  \item Its hom-category from $A$ to $B$ is $\V(A\times B\op)$.
  \item Its composition functors are the external-canceling tensor products
    \[ \otimes_{[B]} \colon \V(A\times B\op) \times \V(B\times C\op) \too \V(A\times C\op). \]
  \item The identity 1-cell of a small category $B$ is
    \begin{equation}
      \lI_B\;=\;(t,s)_! \lS_{\tw(B)} \;\cong\; (t,s)_! \pi_{\tw(B)}^* \lS_{\bbone} \; \in \V(B\times B\op).\label{eq:unit}
    \end{equation}
  \end{itemize}
\end{thm}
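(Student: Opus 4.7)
The strategy is to verify the bicategory axioms directly from the universal properties of coends in the monoidal left derivator \V. Functoriality of $\otimes_{[B]}$ is immediate from its definition as the composite of the external tensor $\otimes\colon \V(A\times B\op)\times\V(B\times C\op)\to\V(A\times B\op\times B\times C\op)$ followed by the coend $\int^{B}$, both of which are functorial in each argument.

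For the associator, given $F\in\V(A\times B\op)$, $G\in\V(B\times C\op)$, and $H\in\V(C\times D\op)$, I would identify both $(F\otimes_{[B]} G)\otimes_{[C]} H$ and $F\otimes_{[B]}(G\otimes_{[C]} H)$ with a single iterated coend
\[
\int^{B}\!\int^{C}\! F\otimes G\otimes H \;\cong\; \int^{B\times C}\! F\otimes G\otimes H.
\]
The Fubini-type identification of iterated coends with a single coend over the product follows from the pointwise formula for colimits in derivators together with the separate cocontinuity of $\otimes$, which allows the coends to be pulled through the monoidal product on each side. Naturality is then automatic from the universal property.

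The subtlest ingredient is the unitor. One must produce natural isomorphisms $\lI_B\otimes_{[B]} F\cong F$ and $F\otimes_{[A]}\lI_A\cong F$. Unpacking the left one requires showing that the canceling tensor of $(t,s)_!\lS_{\tw(B)}$ with $F$ against $B$ recovers $F$. This is a derivator-theoretic co-Yoneda lemma: one rewrites the external tensor, absorbs $\lS_{\tw(B)}$ as the monoidal unit (using $\lS_{\tw(B)}\cong \pi_{\tw(B)}^*\lS_\bbone$ and the projection formula), and reduces to a homotopy finality claim for the twisted arrow projections $t\colon\tw(B)\to B$ and $s\colon\tw(B)\to B\op$. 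The relevant finality statement is that for each $b\in B$ the appropriate slice is the (contractible) under- or over-category of $b$ in $B$. I expect this cofinality check, combined with transporting it through the coend calculus of~\cite[\S5 and Appendix~A]{gps:additivity}, to be the principal obstacle, since it is where the specific formula~\eqref{eq:unit} for $\lI_B$ really earns its keep.

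Finally, the pentagon and triangle coherence axioms reduce to the uniqueness of the universal properties used to build the associators and unitors: the two rebracketings of a fourfold composite yield canonically isomorphic iterated coends over $B\times C\times D$, and the two ways of inserting a unit into a composite agree by the defining property of the co-Yoneda isomorphism. Consequently the required coherence 2-cells are forced to be equal.
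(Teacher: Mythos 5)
This theorem is not proved in the present paper; it is imported from gps:additivity, Theorem~5.9, and the paper simply cites it. Your sketch follows the same overall strategy as the argument given there: functoriality of $\otimes_{[B]}$ from its definition as external tensor followed by coend; associativity via a Fubini theorem for coends together with the separate cocontinuity of $\otimes$; unitality via a derivator-theoretic co-Yoneda lemma resting on a homotopy finality claim; and coherence from the universal-property origin of the structure isomorphisms, which is what gps:additivity, Sections~4--5 and Appendix~A, actually does.

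One imprecision to flag in your account of the unitality step. The slice $(t/b)$ of $t\colon\tw(B)\to B$ is not the over-category $B/b$: its objects are pairs of composable morphisms $a\to a'\to b$ (an object of $\tw(B)$ together with a map from its target), not single morphisms into $b$. It is, however, homotopy equivalent to $B/b$ via an adjunction/retraction sending $(f\colon a\to a',\ g\colon a'\to b)$ to $(a',g)$ and back via $(a',g)\mapsto(\id_{a'},g)$, so the contractibility you invoke is correct even though the identification is not literal. More substantively, the finality lemma in gps:additivity is not stated for the bare projections $t$ and $s$; it concerns a functor arising from the interaction between the twisted arrow category used in the definition of $\lI_B$ and the twisted arrow category hidden inside the coend $\int^B$, and only after a base-change manipulation does it reduce to the contractibility of the relevant (factorization-type) slice categories. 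Your instinct that the co-Yoneda reduction hinges on slices admitting initial/terminal objects is the right one; the caveat is only that the precise functor whose slices you inspect is more intricate than the raw projections from $\tw(B)$.
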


The notation related to the identity $1$-cells $\lI_B\in\V(B\times B\op)$, also called \textbf{identity profunctors}, is as follows. $\tw(B)$ is the \textbf{twisted arrow category} of $B$, i.e., the category of elements of $\hom_B$, and the functor $(t,s)\colon\tw(B)\to B\times B\op$ sends a morphism to its target and source (see \cite[\S5]{gps:additivity}). We refer to $\cProf(\V)$ as the \textbf{bicategory of profunctors} in \V.

\begin{con}\label{con:wcolim}
Let \V be a monoidal left derivator and let \D be a \V-module with tensors $\otimes\colon\V\times\D\to\D$. The external-canceling version of this morphism yields functors
\[
\otimes_{[B]}\colon\V(A\times B\op)\times \D(B\times C\op)\to\D(A\times C\op).
\]
Passing to parametrized versions of these functors, we obtain an external-canceling tensor morphism 
\[
\otimes_{[B]}\colon\V^{A\times B\op}\times\D^{B\times C\op}\to\D^{A\times C\op}.
\]
In particular, plugging in a fixed $W\in\V(A\times B\op)$ and specializing to $C=\bbone$, we obtain an induced partial morphism
\[
\colim^W=(W\otimes_{[B]}-)\colon\D^B\to\D^A,
\] 
the \textbf{weighted colimit morphism with weight} $W\in\V(A\times B\op)$. We abuse terminology and refer to a morphism as a weighted colimit if it is naturally isomorphic to $\colim^W$ for some $W$. In a dual way, if $\D$ is a \V-opmodule, one defines \textbf{weighted limits}
\[
\mathrm{lim}^W=(-\lhd_{[A]}W)\colon\D^A\to\D^B,
\] 
Moreover, if \D is a closed \V-module, then weighted colimits and weighted limits are always adjoint to each other:
\begin{equation}\label{eq:wcolim-adj}
(\colim^W,\mathrm{lim}^W)\colon\D^B\rightleftarrows\D^A.
\end{equation}
\end{con}

\begin{lem}\label{lem:wcolim}
Let \V be a monoidal left derivator and let \D be a \V-module.
\begin{enumerate}
\item The morphism $\otimes_{[B]}\colon\V^{A\times B\op}\times\D^{B\times C\op}\to\D^{A\times C\op}$ preserves colimits in each variable separately.
  In particular, any weighted colimit functor is cocontinuous.\label{item:wc1}
\item If \V is a monoidal derivator, and \D is a derivator and a closed \V-module, then $\otimes_{[B]}$ is a left adjoint of two variables.\label{item:wc2}
\item The morphism $(\lI_B\otimes_{[B]}-)\colon\D^B\to\D^B$ is naturally isomorphic to the identity morphism.\label{item:wc3}
\end{enumerate}
\end{lem}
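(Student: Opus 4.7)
The plan is to address the three parts in sequence.

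For part~\ref{item:wc1}, I would unpack the canceling tensor $\otimes_{[B]}$ as the composite of the parametrized external tensor $\otimes\colon\V^{A\times B\op}\times\D^{B\times C\op}\to\D^{A\times B\op\times B\times C\op}$ followed by the coend $\int^B\colon\D^{A\times B\op\times B\times C\op}\to\D^{A\times C\op}$. The hypothesis that \D is a \V-module says that the underlying action $\V\times\D\to\D$ preserves colimits separately, and this property propagates to the parametrized external version by the standard calculus of parametrized Kan extensions (compare \cite[\S3]{gps:additivity}). The coend is defined as the composite of a restriction along $(t,s)\colon\tw(B)\to B\times B\op$ and a left Kan extension along $\pi_{\tw(B)}\colon\tw(B)\to\bbone$, both of which preserve colimits in the appropriate sense (the restriction being simultaneously a left and a right adjoint), so the coend is cocontinuous in its input. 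Composing these observations shows that $\otimes_{[B]}$ preserves colimits in each variable separately, and specialising $C=\bbone$ with $W$ fixed yields the cocontinuity of $\colim^W$.

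For part~\ref{item:wc2}, the strengthened hypotheses---\V a monoidal derivator and \D a derivator with a \emph{closed} \V-module structure---promote the external tensor to a genuine two-variable left adjoint in the sense of \cite[\S\S8-9]{gps:additivity}. In the derivator context the coend is itself a one-variable left adjoint, with right adjoint the end. The plan is then to invoke the formal fact that composing a two-variable left adjoint in its output with a one-variable left adjoint yields again a two-variable left adjoint: if $F\colon\cX\times\cY\to\cZ$ has partial right adjoints $F^{R_1}$ and $F^{R_2}$, and $G\colon\cZ\to\cW$ has right adjoint $G^R$, then $G\circ F$ has partial right adjoints $F^{R_1}(-,G^R-)$ and $F^{R_2}(-,G^R-)$. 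Applying this with $F$ the parametrized external tensor and $G$ the coend presents $\otimes_{[B]}$ as a two-variable left adjoint, and specialising one slot delivers~\eqref{eq:wcolim-adj}.

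For part~\ref{item:wc3}, I would identify $(\lI_B\otimes_{[B]}-)\cong\id$ as an instance of the co-Yoneda lemma in derivators. The conceptual route uses \cref{thm:bicategory}: since $\lI_B$ is by construction the identity 1-cell of $\cProf(\V)$ at $B$, the \V-module structure on \D promotes the assignment $A\mapsto\D^A$ to a compatible right representation of $\cProf(\V)$, whose left unitor at $B$ supplies the required natural isomorphism. Failing a clean appeal to the bicategory coherence, one can compute directly: substitute $\lI_B=(t,s)_!\pi_{\tw(B)}^*\lS$, commute the left Kan extension $(t,s)_!$ past the external tensor using part~\ref{item:wc1}, simplify the tensor with the pulled-back unit via the unit axiom of the action, and collapse the resulting coend over $\tw(B)$ to $X$ by a homotopy finality argument applied to one of the projections $\tw(B)\to B$. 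The main obstacle will be setting up the coherence of the $\cProf(\V)$-action on \D with sufficient care, since \V and \D are only required to be (left) derivators and the action is pseudo; the cocontinuity and unit axioms already built into the \V-module structure should nevertheless suffice to pin down the asserted natural isomorphism.
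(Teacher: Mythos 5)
Your proposal matches the paper's proof, which is itself just a terse appeal to the general facts that parts (i) and (ii) hold for the external-canceling variant of any cocontinuous (resp.\ left adjoint) two-variable morphism, and that (iii) follows by the same computation as the unitality of $\cProf(\V)$. Your unpacking via external tensor followed by coend, and the direct calculation with $\lI_B=(t,s)_!\pi_{\tw(B)}^\ast\lS$, is exactly what those citations encode; the ``conceptual route'' you mention for (iii) --- reading $\D^\bullet$ as a right $\cProf(\V)$-representation --- is a legitimate alternative intuition but would require coherence scaffolding the paper never sets up, so your fallback direct calculation is the one that actually corresponds to the cited argument.
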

\begin{proof}
  Statements~\ref{item:wc1} and~\ref{item:wc2} are true for the external-canceling variant of any cocontinuous two-variable morphism, while~\ref{item:wc3} follows from the same argument used to prove unitality of the bicategory $\cProf(\V)$.
\end{proof}

\begin{thm}\label{thm:wcolim}
  Let \V be a monoidal left derivator and let \D be a \V-module.
  Then:
  \begin{enumerate}
  \item Restriction morphisms $u^\ast\colon\D^B\to\D^A$ are \V-weighted colimits.\label{item:wcl1}
  \item Left Kan extension morphisms $u_!\colon\D^A\to\D^B$ are \V-weighted colimits.\label{item:wcl2}
  \item If \V and \D are pointed derivators, then right Kan extension morphisms $u_\ast\colon\D^A\to\D^B$ along sieves are weighted colimits.\label{item:wcl3}
  \item If \V and \D are stable derivators, then right homotopy finite right Kan extension morphisms $u_\ast\colon\D^A\to\D^B$ are weighted colimits.\label{item:wcl4}
\end{enumerate}
\end{thm}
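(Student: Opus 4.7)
The plan is to construct explicit weight profunctors for each case and verify the resulting weighted colimits recover the desired Kan extension morphisms, using the identity property $\lI_B \otimes_{[B]} X \cong X$ from \autoref{lem:wcolim} together with commutation of the external-canceling tensor product with various Kan extensions and restrictions.

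For (i), I would take $W_u := (u \times \id)^\ast \lI_B \in \V(A \times B\op)$. Since the external tensor commutes with pullbacks and the coend is unaffected by restriction in variables outside the coend, one obtains
\[ W_u \otimes_{[B]} X \;\cong\; u^\ast(\lI_B \otimes_{[B]} X) \;\cong\; u^\ast X. \]

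For (ii), the candidate weight is $W'_u := (\id \times u\op)^\ast \lI_B \in \V(B \times A\op)$. This case is subtler since the restriction acts on the variable being coended. One approach is a coend Fubini reindexing using the cocontinuity of $\otimes$ from \autoref{lem:wcolim}: rewrite the coend over $A$ as a coend over $B$ after applying $u_!$ in \D, yielding
\[ W'_u \otimes_{[A]} X \;\cong\; \lI_B \otimes_{[B]} u_! X \;\cong\; u_! X. \]
An alternative route exploits the fact that $W_u$ and $W'_u$ form an adjoint pair in the bicategory $\cProf(\V)$ of \autoref{thm:bicategory}, and that the cocontinuous \V-action on \D extends to a bicategorical representation of $\cProf(\V)$, so that $W'_u \otimes_{[A]} -$ is left adjoint to $W_u \otimes_{[B]} - \cong u^\ast$, forcing it to be $u_!$ by uniqueness of adjoints.

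For (iii) and (iv), I would use the common weight $W''_u := (u \times \id)_\ast \lI_A \in \V(B \times A\op)$. In case (iii), $u \times \id$ is a sieve since $u$ is, so in pointed \V this right Kan extension exists and is given by extension by zero; in case (iv) it is right homotopy finite, and exists in stable \V. The verification $W''_u \otimes_{[A]} X \cong u_\ast X$ reduces to commuting $(u \times \id)_\ast$ past the coend $\otimes_{[A]}$. In (iii) this commutation follows because the cocontinuous action of a pointed module commutes with extension by zero (by \autoref{prop:ptd-comm}); in (iv) it follows because right exact morphisms out of stable derivators preserve right homotopy finite right Kan extensions (by \autoref{thm:stable-lim-III}). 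Either way one concludes
\[ W''_u \otimes_{[A]} X \;\cong\; u_\ast(\lI_A \otimes_{[A]} X) \;\cong\; u_\ast X. \]
The main obstacle I anticipate is part (ii): the coend Fubini identification requires careful bookkeeping of external-canceling tensors and Kan extensions, or alternatively a precise statement that the \V-module structure on \D extends to a cocontinuous bicategorical representation of $\cProf(\V)$.
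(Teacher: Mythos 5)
Your choice of weights agrees with the paper's for parts (i), (iii), and (iv), and (i) is handled just as in the paper, but there are real gaps in (ii)--(iv).

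For (ii), the obstacle you flag is genuine, and the paper avoids it by a better choice of weight: $(u\times\id)_!\lI_A\in\V(B\times A\op)$ rather than your $(\id\times u\op)^\ast\lI_B$. With the former, $u_!$ is applied in the $\V$-coordinate rather than under the coend, so
\[
u_!(X)\cong u_!(\lI_A\otimes_{[A]}X)\cong \big((u\times\id)_!\lI_A\big)\otimes_{[A]}X
\]
follows at once from the cocontinuity of the partial morphism $(-\otimes_{[A]}X)\colon\V^{A\op}\to\D$ from \cref{lem:wcolim}, exactly mirroring the argument for (i). Your weight is in fact isomorphic to the paper's --- the paper records $(\id\times u\op)^\ast\lI_B\cong(u\times\id)_!\lI_A$ just after \cref{thm:wcolim}, as a consequence of $\cProf(\V)$ being a framed bicategory --- so you could reduce to the paper's computation by invoking this; but as written your coend-Fubini reindexing is not established by anything in the paper, nor is the ``cocontinuous bicategorical representation'' of $\cProf(\V)$ that your alternative route requires.

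For (iii) and (iv) the weights and overall strategy match the paper's, but your citations do not furnish what you need. The fact to be justified is that the cocontinuous partial morphism $(-\otimes_{[A]}X)\colon\V^{A\op}\to\D$ --- a morphism between two different derivators --- preserves right Kan extensions along sieves, resp.\ right homotopy finite right Kan extensions. Both \cref{prop:ptd-comm} and \cref{thm:stable-lim-III} speak only of commutation of Kan extension functors $u_!,v_\ast$ of a single fixed derivator $\D$ and so do not apply to a morphism $\V^{A\op}\to\D$. The ingredients actually used are that a cocontinuous morphism of pointed derivators is automatically pointed and hence preserves right Kan extensions along sieves \cite[Cor.~8.2]{groth:can-can}, and that a cocontinuous morphism of stable derivators is exact \cite[Cor.~9.9]{groth:can-can} and hence preserves right homotopy finite right Kan extensions \cite[Thm.~9.14]{groth:can-can}.
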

\begin{proof}
For every fixed $X\in\D(B)$ and $u\colon A\to B$, pseudo-naturality of the partial morphism $(-\otimes_{[B]}X)\colon\V^{B\op}\to \D$ and \autoref{lem:wcolim} yields
\[
u^\ast(X)\cong u^\ast(\lI_B\otimes_{[B]}X)\cong\big((u\times\id)^\ast\lI_B\big)\otimes_{[B]}X.
\]
This defines a natural isomorphism $u^\ast\cong \big((u\times\id)^\ast\lI_B\big)\otimes_{[B]}-\colon\D^B\to\D^A$, thereby exhibiting $u^\ast$ as a weighted colimit.

Similarly, if we fix $X\in\D(A)$, then by \autoref{lem:wcolim} the partial morphism
\begin{equation}\label{eq:par-mor-wcolim}
(-\otimes_{[A]}X)\colon\V^{A\op}\to\D
\end{equation}
is cocontinuous.  Given a functor $u\colon A\to B$ we obtain natural isomorphisms
\[
u_!(X)\cong u_!(\lI_A\otimes_{[A]}X)\cong \big((u\times\id)_!\lI_A\big)\otimes_{[A]}X,
\]
hence a natural isomorphism $u_!\cong\big(\big((u\times\id)_!\lI_A\big)\otimes_{[A]}-\big)\colon\D^A\to\D^B$, identifying $u_!$ as a weighted colimit.

If \V and \D are pointed, then \eqref{eq:par-mor-wcolim} is a cocontinuous morphism of pointed derivators and hence automatically pointed, hence preserves right Kan extensions along sieves \cite[Cor.~8.2]{groth:can-can}.
Thus, a similar calculation as above yields for every such $u\colon A\to B$ a natural isomorphism
\[
u_\ast\cong \big(\big((u\times\id)_\ast\lI_A\big)\otimes_{[A]}-\big) \colon\D^A\to\D^B,
\]
exhibiting $u_\ast$ as a weighted colimit. 
Similarly, if \V and \D are stable derivators, we note that \eqref{eq:par-mor-wcolim} is an exact morphism of stable derivators (by \autoref{lem:wcolim} and \cite[Cor.~9.9]{groth:can-can}) and it hence preserves right homotopy finite right Kan extensions \cite[Thm.~9.14]{groth:can-can}.  
\end{proof}

Applying \cref{thm:wcolim} to the \V-module $\V^{C\op}$, we find that for any $X\in \V(B\times C\op)$ and $Y\in \V(A\times C\op)$ we have
\begin{align*}
  \big((u\times\id)^\ast\lI_B\big) \otimes_{[B]} X &\cong (u\times\id)^\ast X\\
  \big((u\times\id)_!\lI_A\big) \otimes_{[A]} Y &\cong (u\times\id)_! Y
\end{align*}
Note that in this case, $\otimes_{[B]}$ and $\otimes_{[A]}$ are the composition in $\cProf(\V)$; thus restriction and left Kan extension in \V can both be described using composition in $\cProf(\V)$.
The special objects $(u\times\id)^\ast\lI_B$ and $(u\times\id)_!\lI_A$ are sometimes called \textbf{base change objects}.
Dually, for any $X\in \V(E\times B\op)$ and $Y\in \V(E\times A\op)$ we have
\begin{align*}
  X \otimes_{[B]} \big((\id\times u\op)^\ast\lI_B\big) &\cong (\id\times u\op)^\ast X\\
  Y \otimes_{[A]} \big((\id\times u\op)_!\lI_A\big)  &\cong (\id\times u\op)_! Y
\end{align*}
In fact, these dual base change objects are actually isomorphic to the first two swapped:
\begin{align*}
  (\id\times u\op)^\ast\lI_B &\cong (u\times\id)_!\lI_A\\
  (\id\times u\op)_!\lI_A &\cong (u\times\id)^\ast\lI_B
\end{align*}
This all follows from the fact that $\cProf(\V)$ is actually a ``framed bicategory''; see~\cite{shulman:frbi} and~\cite[(15.2)]{ps:linearity}.

\begin{rmk}
Let \V be a monoidal left derivator and \D a \V-module. For $u\colon A\to B$ in $\cCat$ we obtain an isomorphism
\[
u_!\cong ((\id\times u\op)^\ast\lI_B)\otimes_{[A]}-\colon\D^A\to\D^B.
\]
Specializing to $u=\pi_A\colon A\to\bbone$ we deduce that colimits are weighted colimits with constant weight $\pi_{A\op}^\ast\lS_\bbone$. More generally, the weight for $u_!$ has components
\[
((\id\times u\op)^\ast\lI_B)_{b,a}\cong\coprod_{\hom_B(ua,b)}\lS_\bbone,
\]
and the isomorphism $u_!X\cong((\id\times u\op)^\ast\lI_B)\otimes_{[A]}X$ is hence a left derivator version of the usual coend formula for left Kan extensions in sufficiently cocomplete categories (\cite[Thm.~X.4.1]{maclane}). 
\end{rmk}

\section{Stability via weighted colimits}
\label{sec:stab-via-wcolim}

\cref{thm:wcolim}\ref{item:wcl3} and~\ref{item:wcl4} cry out for a generalization to $\Phi$-stability.

\begin{defn}
  If $\Phi$ is a class of functors $u\colon A\to B$, we define a left derivator \D to be \textbf{right $\Phi$-stable} if it \emph{has} right Kan extensions along each $u\in \Phi$ which moreover commute with arbitrary left Kan extensions.
\end{defn}

By \cref{thm:ldh-ran}, if $\D$ is right $\Phi$-stable, then so is $\lend(\D)$.

\begin{thm}\label{thm:stable-dual}
  Let \V be a monoidal left derivator and $u\colon A\to B$ a functor.
  The following are equivalent:
  \begin{enumerate}
  \item $\V$ is right $u\op$-stable.\label{item:sd0}
  \item The base change profunctor $(u\times\id)_!\lI_A\in \cProf(\V)(B,A)$ has a right adjoint in $\cProf(\V)$.\label{item:sd1}
  \item The base change profunctor $(\id\times u)_!\lI_{A\op}\in \cProf(\V)(A\op,B\op)$ has a left adjoint in $\cProf(\V)$.\label{item:sd1op}
  \item The morphism $u_!\colon\V^A\to\V^B$ has a left adjoint that is a weighted colimit functor.\label{item:sd2}
  \item The right Kan extension $(u\op)_\ast\colon \V^{A\op} \to \V^{B\op}$ exists and is a \V-weighted \emph{colimit} functor.\label{item:sd2op}
  \end{enumerate}
\end{thm}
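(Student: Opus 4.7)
The plan is to route every condition through the bicategory $\cProf(\V)$. First, recall from \cref{thm:wcolim} and the subsequent discussion that
\[
u^\ast \;=\; \colim^{\hat{u}},\qquad u_! \;=\; \colim^{\check{u}},
\]
where $\hat{u} := (u\times\id)^\ast\lI_B \cong (\id\times u\op)_!\lI_A \in \cProf(\V)(A,B)$ is the \emph{companion} of $u$ and $\check{u} := (\id\times u\op)^\ast\lI_B \cong (u\times\id)_!\lI_A \in \cProf(\V)(B,A)$ is its \emph{conjoint}; the standard adjunction $u_!\dashv u^\ast$ realises the framed-bicategory adjunction $\hat{u}\dashv\check{u}$. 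The same identifications applied to $u\op$ give $(u\op)^\ast = \colim^{\hat{u\op}}$ for the profunctor $\hat{u\op} = (\id\times u)_!\lI_{A\op}$ appearing in~(iii).

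The key auxiliary claim is a correspondence between adjunctions in $\cProf(\V)$ and adjunctions of weighted-colimit functors, with direction reversed: for $W\in\cProf(\V)(B,A)$ and $W'\in\cProf(\V)(A,B)$, an adjunction $W'\dashv W$ in $\cProf(\V)$ yields $\colim^W\dashv\colim^{W'}\colon\V^A\rightleftarrows\V^B$ by whiskering the unit and counit against a parameter and invoking unitality of the canceling tensor (\cref{lem:wcolim}\ref{item:wc3}). Conversely, a functorial adjunction between two such weighted colimits recovers the profunctor adjunction by evaluating the parametrized unit and counit on the identity profunctors $\lI_A$ and $\lI_B$. Given this correspondence, (ii)$\Leftrightarrow$(iv) reduces to matching right adjoints of $\check{u}$ with weighted-colimit left adjoints of $u_! = \colim^{\check{u}}$, while (iii)$\Leftrightarrow$(v) matches left adjoints of $\hat{u\op}$ with right adjoints of $(u\op)^\ast = \colim^{\hat{u\op}}$, which by uniqueness must equal $(u\op)_\ast$. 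Meanwhile (ii)$\Leftrightarrow$(iii) is precisely the framed-bicategory duality recalled before this section: the swap $B\times A\op\cong A\op\times B$ sends $\check{u}=(u\times\id)_!\lI_A$ to $\hat{u\op}=(\id\times u)_!\lI_{A\op}$ (with the corresponding identification $\lI_A\leftrightarrow\lI_{A\op}$), and converts right adjoints of $\check{u}$ into left adjoints of $\hat{u\op}$.

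That (v) implies (i) is immediate from \cref{lem:wcolim}\ref{item:wc1}, since weighted colimits are cocontinuous and hence commute with all left Kan extensions. The only substantial step, and the main obstacle, is (i)$\Rightarrow$(v). Given right $u\op$-stability, the candidate weight is $W:=(u\op\times\id)_\ast\lI_{A\op}\in\V(B\op\times A)$, and I would mimic the calculations of \cref{thm:wcolim}: for $X\in\V(A\op)$, start from $X\cong\lI_{A\op}\otimes_{[A\op]}X$ by unitality, then use the stability hypothesis to exchange the parametrized right Kan extension $(u\op\times\id)_\ast$ with the canceling tensor, giving
\[
(u\op)_\ast X \;\cong\; (u\op)_\ast\bigl(\lI_{A\op}\otimes_{[A\op]}X\bigr) \;\cong\; \bigl((u\op\times\id)_\ast\lI_{A\op}\bigr)\otimes_{[A\op]}X \;=\; \colim^W X.
\]
The delicate point is that $\otimes_{[A\op]}$ decomposes as an external tensor product followed by a coend, each a left Kan extension in parametrized form; the stability hypothesis must be invoked in its shifted version over suitable $\V^{A\times C}$ to commute $(u\op\times\id)_\ast$ past both in turn. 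Carrying out this parametrized exchange is exactly where the hypothesis (i) is used and is the only genuinely non-formal ingredient of the argument.
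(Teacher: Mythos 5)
Your overall structure---route through $\cProf(\V)$, use the weighted-colimit/profunctor-adjunction dictionary for (ii)$\Leftrightarrow$(iv) and (iii)$\Leftrightarrow$(v), the swap for (ii)$\Leftrightarrow$(iii), and cocontinuity for (v)$\Rightarrow$(i)---all matches the paper. But your treatment of the one implication you single out as ``the only genuinely non-formal ingredient,'' namely (i)$\Rightarrow$(v), has a genuine gap.

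You want to commute $(u\op)_\ast$ past the partial canceling tensor $(-\otimes_{[A\op]}X)\colon\V^A\to\V$, and you claim this works because $\otimes_{[A\op]}$ decomposes into an external tensor and a coend, ``each a left Kan extension in parametrized form.'' This is not correct for the external tensor. The coend is indeed a left Kan extension (along a functor out of the twisted arrow category, in a shift direction unrelated to $u\op$), so right $u\op$-stability handles it directly. But the external tensor $(-\boxtimes X)\colon\V^A\to\V^{A\times A\op}$, for a fixed $X$, is \emph{not} a left Kan extension---it is merely a cocontinuous morphism of left derivators (Lemma~\ref{lem:wcolim}\ref{item:wc1}), concretely a restriction $\pi^\ast$ followed by the internal tensor with a fixed object. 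Right $u\op$-stability only speaks to commutativity of $(u\op)_\ast$ with left Kan extensions; it says nothing about commutativity with an arbitrary cocontinuous morphism, and that implication is false in general. This is precisely the point where the proof of Theorem~\ref{thm:wcolim}\ref{item:wcl3}--\ref{item:wcl4} in the pointed and stable cases leans on \emph{external} results: a cocontinuous morphism between pointed derivators is automatically pointed and hence preserves right Kan extensions along sieves, and a cocontinuous morphism between stable derivators is automatically exact and hence preserves right homotopy finite right Kan extensions. No analogous statement exists for a general class $\Phi$, so your argument cannot be completed as described.

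The paper routes (i) through (ii) precisely to avoid this. It exhibits $(\id\times u\op)_\ast\lI_A$ as a right lifting of $\lI_A$ along the base change object $(u\times\id)_!\lI_A$, and then invokes the bicategorical fact that a right lifting of the identity along $X$ is an honest right adjoint as soon as it is preserved by precomposition with $X$. The preservation amounts to commuting $(u\times\id)_!$ with $(\id\times u\op)_\ast$---two Kan extensions in \emph{unrelated} variables---which is literally an instance of right $u\op$-stability, with no hidden appeal to a continuity property of tensoring with a fixed object. If you want to keep a ``direct'' proof of (i)$\Rightarrow$(v), you would first need to establish a lemma of the form ``in a right $\Phi$-stable left derivator, every cocontinuous morphism of $\V$-modules preserves right Kan extensions along $\Phi$,'' and that lemma does not obviously hold.
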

\begin{proof}
  We first show that~\ref{item:sd1} and~\ref{item:sd1op} are equivalent.
  The right adjoint in~\ref{item:sd1} would be an object $Z\in \V(A\times B\op)$, whereas the left adjoint in~\ref{item:sd1op} would be an object $Z'\in \V(B\op\times (A\op)\op)$; but of course these are equivalent categories.
  The unit and counit in~\ref{item:sd1} would be morphisms
  \begin{align*}
    \eta &: \lI_B \to (u\times\id)_!\lI_A \otimes_{[A]} Z \cong (u\times\id)_! Z\\
    \ep &: (\id\times u\op)^\ast Z \cong  Z \otimes_{[B]} (u\times\id)_!\lI_A \to \lI_A
  \end{align*}
  whereas the unit and counit in~\ref{item:sd1op} would be morphisms
  \begin{align*}
    \eta' &: \lI_{B\op} \to Z'\otimes_{[A\op]} (\id\times u)_!\lI_{A\op} \cong (\id\times u)_! Z'\\
    \ep' &: (u\op\times \id)^\ast Z' \cong (\id\times u)_!\lI_{A\op} \otimes_{[B\op]} Z' \to \lI_{A\op}.
  \end{align*}
  Thus, to give $\eta$ is the same as to give $\eta'$, since $\lI_{B\op}$ corresponds to $\lI_B$ under the equivalence $\V(B\times B\op) \simeq \V(B\op \times (B\op)\op)$, and so on.
  We leave it to the reader to check that the triangle identities likewise correspond.

  Now we show that~\ref{item:sd0} implies~\ref{item:sd1}.
  We take the right adjoint to be $(\id\times u\op)_\ast \lI_A \in \cProf(\V)(A,B)$.
  Then morphisms $Y\to  (\id\times u\op)_\ast \lI_A$ are equivalent to morphisms $(\id\times u\op)^\ast Y \to \lI_A$, i.e.\ morphisms $Y\otimes_{[B]} (u\times\id)_!\lI_A \to \lI_A$.
  In bicategorical language, $(\id\times u\op)_\ast \lI_A$ is a \emph{right lifting} of $\lI_A$ along $(u\times\id)_!\lI_A$.
  In general, a right lifting of the identity along a 1-cell $X$ is a right adjoint as soon as it is preserved by precomposition with $X$ (see for instance~\cite[Theorem X.7.2]{maclane} or~\cite[16.4.12]{maysig:pht}).
  In our case when $X = (u\times\id)_!\lI_A$, precomposition with $X$ is just left Kan extension along $u$, which by our assumption of $u\op$-stability preserves the right Kan extension $(\id\times u\op)_\ast$.
  Thus, $(u\times\id)_!\lI_A \otimes_{[A]} (\id\times u\op)_\ast \lI_A \cong (\id\times u\op)_\ast \big((u\times\id)_!\lI_A\big)$, so it has an analogous universal property, as desired.

  Now, if~\ref{item:sd1} holds, then since weighted colimits are contravariantly functorial on profunctors, the adjunction $(u\times\id)_!\lI_A \adj Z$ yields an adjunction $\colim^Z \adj \colim^{(u\times\id)_!\lI_A} = u_!$.
  This gives~\ref{item:sd2}.
  Conversely, if $Z\in\V(A\times B\op)$ is such that $\colim^Z \adj u_! = \colim^{(u\times\id)_!\lI_A}$, then since composition in $\cProf(\V)$ is a special case of weighted colimits, we have natural adjunctions $(Z\otimes_{[B]} -) \adj ((u\times\id)_!\lI_A \otimes_{[A]} -)$, which by the bicategorical Yoneda lemma induce an adjunction $(u\times\id)_!\lI_A\adj Z$ in $\cProf(\V)$.

  Similarly,~\ref{item:sd2op} is equivalent to~\ref{item:sd1op}, since $\colim^{(\id\times u)_!\lI_{A\op}} \cong (u\op)^\ast$.
  Finally, if~\ref{item:sd2op} holds then $(u\op)_\ast$, being a weighted colimit, commutes with all left Kan extensions, so that $\V$ is right $u\op$-stable.
\end{proof}

Note that \cref{thm:stable-dual}\ref{item:sd2op} is a generalization of \cref{thm:wcolim}\ref{item:wcl3} and~\ref{item:wcl4}.
This can be regarded as an explanation of ``why'' $\Phi$-limits in a right $\Phi$-stable derivator commute with all colimits: they are themselves weighted colimits.
(If \V is not symmetric, then arbitrary weighted colimits need not commute with arbitrary other \emph{weighted} colimits.
However, left Kan extensions always commute with all weighted colimits, by \cref{lem:wcolim}\ref{item:wc1}.
If we express left Kan extensions as weighted colimits themselves, then they are in the ``center'' of \V.
If \V \emph{is} symmetric, then the duality $A\mapsto A\op$ extends to a self-duality of the bicategory $\cProf(\V)$, from which the equivalence of~\ref{item:sd1} and~\ref{item:sd1op} follows formally; the proof given above shows that this equivalence remains true even in the non-symmetric case, due to this ``centrality''.)

Now we can answer our first two questions from \cref{sec:galois}.

\begin{cor}\label{thm:stab-op}
  For a derivator $\D$ and a class of functors $\Phi$, the following are equivalent.
  \begin{enumerate}
  \item $\D$ is left $\Phi$-stable, i.e.\ $\Phi$-colimits in \D commute with arbitrary limits.\label{item:so1}
  \item For each $u\in\Phi$, the morphism $u_! :\D^A \to\D^B$ has a left adjoint.\label{item:so2}
  \item $\D$ is right $\Phi\op$-stable, i.e.\ $\Phi\op$-limits in \D commute with arbitrary colimits.\label{item:so3}
  \item For each $u\in\Phi\op$, the morphism $(u\op)_\ast :\D^{A\op} \to\D^{B\op}$ has a right adjoint.\label{item:so4}
  \end{enumerate}
\end{cor}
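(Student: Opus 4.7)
The implications (ii) $\Rightarrow$ (i) and (iv) $\Rightarrow$ (iii) are immediate, since a right adjoint morphism of derivators preserves limits and hence commutes with all right Kan extensions, while a left adjoint preserves colimits and commutes with all left Kan extensions. The substantive content lies in the reverse directions, and the plan is to deduce them by applying \cref{thm:stable-dual} to the monoidal left derivator $\V = \lend(\D)$, over which $\D$ is canonically a module.

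For (iii) $\Rightarrow$ (ii), fix $u\in\Phi$. By hypothesis $\D$ has right Kan extensions along $u\op$ that commute with arbitrary left Kan extensions, so by \cref{thm:ldh-ran} the same property is inherited by $\V=\lend(\D)$; in other words, $\V$ is right $u\op$-stable. \cref{thm:stable-dual} then supplies a right adjoint $Z\in\cProf(\V)$ to the base change profunctor $(u\times\id)_!\lI_A$. By \cref{thm:wcolim}\ref{item:wcl2}, the morphism $u_!\colon\D^A\to\D^B$ is naturally isomorphic to the $\V$-weighted colimit $\colim^{(u\times\id)_!\lI_A}$ on the $\V$-module $\D$. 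Weighted colimits are 2-functorial in $\cProf(\V)$ on \emph{any} $\V$-module, contravariantly in the profunctor variable, because composition in $\cProf(\V)$ is itself a weighted-colimit operation (compatible with the $\V$-action on $\D$). Applying this 2-functoriality to the adjunction $(u\times\id)_!\lI_A\dashv Z$ in $\cProf(\V)$ yields an adjunction $\colim^Z\dashv u_!$ on $\D$; this is exactly the argument appearing inside the proof of \cref{thm:stable-dual}, now run against the $\V$-module $\D$ rather than $\V$ itself. It produces the left adjoint required in (ii).

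Finally, (i) $\Rightarrow$ (iv) follows by duality: apply the just-established implication (iii) $\Rightarrow$ (ii) to $\D\op$ with the class $\Phi\op$. Under the standard unwinding of conventions for opposite derivators (in which $u_!$ and $u_*$ swap roles and left and right Kan extensions are interchanged), the hypothesis (iii) for $(\D\op,\Phi\op)$ translates to (i) for $(\D,\Phi)$, and the conclusion (ii) for $(\D\op,\Phi\op)$ translates to (iv) for $(\D,\Phi)$. This closes the cycle (ii)$\Rightarrow$(i)$\Rightarrow$(iv)$\Rightarrow$(iii)$\Rightarrow$(ii). The main obstacle I expect is the careful verification that the weighted-colimit argument for adjoint profunctors extends verbatim from $\V$ to the module $\D$---in particular tracking sources, targets, and the direction reversal induced by contravariance---together with the opposite-derivator bookkeeping in the final duality step; all the real work is already done by \cref{thm:stable-dual}, \cref{thm:ldh-ran}, and the identification of $u_!$ with a $\V$-weighted colimit in \cref{thm:wcolim}.
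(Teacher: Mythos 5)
Your proof is correct and takes essentially the same route as the paper: the easy implications via right/left adjoints preserving limits/colimits, then (iii) $\Rightarrow$ (ii) by passing to $\V = \lend(\D)$, invoking \cref{thm:ldh-ran} to transfer $\Phi\op$-stability to $\V$, and using \cref{thm:stable-dual} to get a left adjoint, with (i) $\Rightarrow$ (iv) by duality. The one place you go beyond the paper's wording is where you make explicit that the adjunction $(u\times\id)_!\lI_A \dashv Z$ in $\cProf(\V)$ produces an adjunction $\colim^Z \dashv u_!$ on the $\V$-module $\D$ (not just on $\V$ itself) via the contravariant 2-functoriality of weighted colimits; the paper's proof of the corollary states the conclusion without repeating this step, but it is exactly the argument used inside the proof of \cref{thm:stable-dual} and, as you correctly note, it applies to any $\V$-module. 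This is a detail worth filling in, and you have it right.
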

\begin{proof}
  We have~\ref{item:so2} implies~\ref{item:so1}, since all right Kan extensions exist in a derivator (as opposed to a left derivator), and are preserved by any right adjoint morphism.
  Dually,~\ref{item:so4} implies~\ref{item:so3}.
  We will prove that~\ref{item:so3} implies~\ref{item:so2}; by duality then also~\ref{item:so1} implies~\ref{item:so4} and we are done.
  If $\D$ is right $\Phi\op$-stable, then we remarked above that $\lend(\D)$ is right $\Phi\op$-stable, and $\D$ is a $\lend(\D)$-module.
  Therefore, by \cref{thm:stable-dual}, $u_!$ has a left adjoint (that is even a weighted colimit functor) for each $u\in\Phi$.
\end{proof}

\begin{cor}
  If $\Phi=\Phi\op$, then $\stabl(\Phi)=\stabr(\Phi)$.\qed
\end{cor}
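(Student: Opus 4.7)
The proof is essentially immediate from the preceding \cref{thm:stab-op}. My plan is to note that the equivalence of items~\ref{item:so1} and~\ref{item:so3} in \cref{thm:stab-op} can be reformulated as the identity of collections of derivators
\[
\stabl(\Phi) = \stabr(\Phi\op)
\]
for every class of functors $\Phi$; indeed, a derivator lies in the left-hand side exactly when it is left $\Phi$-stable, which by \cref{thm:stab-op} happens exactly when it is right $\Phi\op$-stable, i.e.\ lies in the right-hand side.

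Given this, the corollary follows by a single substitution: if $\Phi=\Phi\op$, then
\[
\stabl(\Phi) \;=\; \stabr(\Phi\op) \;=\; \stabr(\Phi),
\]
and there is nothing further to check. The only conceivable obstacle would be if \cref{thm:stab-op} secretly required $\Phi$ to consist of a single functor, but inspection of its statement and proof shows that the equivalence is established pointwise in $u\in\Phi$ (the adjoints in items~\ref{item:so2} and~\ref{item:so4} are produced one functor at a time), so quantifying over all $u\in\Phi$ gives the class-level statement used here. Thus the proof is a one-liner that merely records the self-dual specialization of \cref{thm:stab-op}.
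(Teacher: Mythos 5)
Your proof is correct and is exactly the argument the paper has in mind: the corollary is stated with an immediate \qed because it follows by substituting $\Phi=\Phi\op$ into the identity $\stabl(\Phi)=\stabr(\Phi\op)$ extracted from the equivalence of items~\ref{item:so1} and~\ref{item:so3} of \cref{thm:stab-op}. Nothing to add.
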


This explains the self-dual nature of pointedness, semiadditivity, and stability as due to the fact that $\Phi=\{\emptyset\}$, $\Phi=\mathsf{FINDISC}$, and $\Phi=\mathsf{FIN}$ are self-dual.
Similarly, it explains the identity $\stabl(\{\emptyset,\ulcorner\}) = \stabr(\{\emptyset,\lrcorner\}) = \mathsf{STABLE}$, since $(\ulcorner)\op = \lrcorner$.

\section{Stability via iterated adjoints}
\label{sec:fun}

In particular, \cref{thm:stab-op} implies that we can characterize $\Phi$-stability in terms of iterated adjoints to constant morphism morphisms.  In this section we describe what this looks like more concretely in the pointed and stable cases.

\begin{prop}\label{prop:char-ptd}
The following are equivalent for a derivator \D.
\begin{enumerate}
\item The derivator \D is pointed.\label{item:p1}
\item The morphism $\emptyset_!\colon\D^\emptyset\to\D$ is a right adjoint.\label{item:p2}
\item The left Kan extension morphism $1_!\colon\D\to\D^{[1]}$ along the universal cosieve $1\colon\bbone\to[1]$ is a right adjoint.\label{item:p3}
\item For every cosieve $u\colon A\to B$ the left Kan extension morphism $u_!\colon\D^A\to\D^B$ is a right adjoint.\label{item:p4}
\item The morphism $\emptyset_\ast\colon\D^\emptyset\to\D$ is a left adjoint.\label{item:p5}
\item The right Kan extension morphism $0_\ast\colon\D\to\D^{[1]}$ along the universal sieve $0\colon\bbone\to[1]$ is a left adjoint.\label{item:p6}
\item For every sieve $u\colon A\to B$ the right Kan extension morphism $u_\ast\colon\D^A\to\D^B$ is a left adjoint.\label{item:p7}
\end{enumerate}
\end{prop}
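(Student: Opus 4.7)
The plan is to reduce everything to Corollary~\ref{thm:stab-op} combined with Proposition~\ref{prop:ptd-comm}, and to invoke duality for the second half.

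First, by Corollary~\ref{thm:stab-op} applied with $\Phi$ the class of cosieves, \D is left $\Phi$-stable if and only if $u_!\colon\D^A\to\D^B$ has a left adjoint (equivalently, is a right adjoint) for every cosieve $u\colon A\to B$; this is precisely~\ref{item:p4}. On the other hand, Proposition~\ref{prop:ptd-comm}\ref{item:pc4a} identifies left $\{\text{cosieves}\}$-stability with \D being pointed. Together these give \ref{item:p1}$\iff$\ref{item:p4}.

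The implications \ref{item:p4}$\Rightarrow$\ref{item:p2} and \ref{item:p4}$\Rightarrow$\ref{item:p3} are immediate because $\emptyset\to\bbone$ and $1\colon\bbone\to[1]$ are cosieves. To close the loop I would argue the two converses directly, unpacking what ``right adjoint morphism of derivators'' means on underlying categories. For \ref{item:p2}$\Rightarrow$\ref{item:p1}: the derivator axiom $\D(\emptyset)\simeq\bbone_{\cCat}$ makes $\D^\emptyset$ the terminal derivator, so the only candidate for a left adjoint to $\emptyset_!$ is the essentially unique morphism $L\colon\D\to\D^\emptyset$, and the adjunction $L\adj\emptyset_!$ forces $\Hom_{\D(\bbone)}(x,\emptyset_!(\ast))$ to be a singleton for every $x\in\D(\bbone)$. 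Hence the initial object $\emptyset_!(\ast)$ is also terminal, so \D is pointed. For \ref{item:p3}$\Rightarrow$\ref{item:p1}: a right adjoint morphism of derivators preserves terminal objects levelwise; applied to the terminal object $t\in\D(\bbone)$ this says that the coherent arrow $1_!(t)$, whose underlying diagram is $(\emptyset_!(\ast)\to t)$, must be the terminal object $\pi_{[1]}^\ast t = (t\to t)$ of $\D([1])$. Evaluation at $0\in[1]$ now yields $\emptyset_!(\ast)\iso t$, so again initial and terminal coincide in $\D(\bbone)$ and \D is pointed.

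Finally, applying the involution $\D\mapsto\D\op$---which exchanges sieves with cosieves, $u_!$ with $(u\op)_\ast$, and left with right adjoints---to the equivalence \ref{item:p1}$\iff$\ref{item:p4} and to the implications \ref{item:p4}$\Rightarrow$\ref{item:p2}, \ref{item:p4}$\Rightarrow$\ref{item:p3} yields the analogous statements \ref{item:p1}$\iff$\ref{item:p7} and \ref{item:p7}$\Rightarrow$\ref{item:p5}, \ref{item:p7}$\Rightarrow$\ref{item:p6}, completing the circle of equivalences.

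I expect the main (though modest) obstacle to lie in the two converse arguments \ref{item:p2}$\Rightarrow$\ref{item:p1} and \ref{item:p3}$\Rightarrow$\ref{item:p1}, where one has to take a bicategorical adjunction between morphisms of derivators and extract from it the familiar statement that an initial object is terminal. This reduction is essentially routine, since adjunctions of derivator morphisms restrict to honest adjunctions on every underlying category $\D(A)$, but it is the one place where the proof does not simply cite an earlier result.
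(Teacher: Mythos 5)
Your proof is correct, and you have independently arrived at exactly the alternative route that the paper itself acknowledges in the remark immediately following this proposition (``We could similarly have proven \cref{prop:char-ptd} by combining \cref{prop:ptd-comm,thm:stab-op}, but we chose instead to give a proof with a closer connection to previous literature.''). Concretely, where the paper establishes \ref{item:p1}$\Rightarrow$\ref{item:p4} by citing \cite[Cor.~3.8]{groth:ptstab}, you derive the full biconditional \ref{item:p1}$\iff$\ref{item:p4} from the weighted-colimit machinery of \cref{thm:stab-op} together with \cref{prop:ptd-comm}\ref{item:pc4a}; this is heavier machinery but is self-contained within the present paper, whereas the paper's choice leans on prior literature and keeps the proof elementary. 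Likewise your direct argument for \ref{item:p2}$\Rightarrow$\ref{item:p1} (using that $\D^\emptyset$ is the terminal derivator, so a left adjoint of $\emptyset_!$ is essentially unique and the adjunction at level $\bbone$ makes the initial object terminal) is a self-contained replacement for the paper's citation of \cite[Cor.~3.5]{groth:ptstab}, while your argument for \ref{item:p3}$\Rightarrow$\ref{item:p1} coincides with the paper's. One small imprecision: in your final duality step you list only the dualizations of \ref{item:p1}$\iff$\ref{item:p4}, \ref{item:p4}$\Rightarrow$\ref{item:p2}, and \ref{item:p4}$\Rightarrow$\ref{item:p3}, which yields \ref{item:p1}$\iff$\ref{item:p7}, \ref{item:p7}$\Rightarrow$\ref{item:p5}, and \ref{item:p7}$\Rightarrow$\ref{item:p6} but omits the return implications from \ref{item:p5} and \ref{item:p6}; you should also dualize \ref{item:p2}$\Rightarrow$\ref{item:p1} and \ref{item:p3}$\Rightarrow$\ref{item:p1} (or, more cleanly, observe up front as the paper does that duality reduces the whole proposition to the equivalence of \ref{item:p1}--\ref{item:p4}). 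This is a trivial fix, not a mathematical gap.
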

\begin{proof}
By duality it suffices to show the equivalence of the first four statements. The implication~\ref{item:p1} implies~\ref{item:p4} is \cite[Cor.~3.8]{groth:ptstab}. Since the empty functor $\emptyset\colon\emptyset\to\bbone$ is a cosieve it remains to show that~\ref{item:p2} or~\ref{item:p3} imply~\ref{item:p1}. The case of~\ref{item:p2} is taken care of by the proof of \cite[Cor.~3.5]{groth:ptstab}. In the remaining case, if $1_!$ is a right adjoint it preserves all limits and hence terminal objects. Since the terminal object in $\D([1])$ looks like $(\ast\to\ast)$, this has by \cite[Prop.~1.23]{groth:ptstab} to be isomorphic to $1_!(\ast)\cong(\emptyset\to\ast)$. Evaluating this isomorphism at $0$ shows that \D is pointed.
\end{proof}

These additional adjoint functors are sometimes referred to as \textbf{(co)exceptional inverse image functors} (see \cite[\S3]{groth:ptstab}).

\begin{rmk}\label{rmk:C-inverse}
In \cite{groth:ptstab} the cone $C\colon\D^{[1]}\to\D$ and the fiber $F\colon\D^{[1]}\to\D$ is defined in pointed derivators only, but the same formulas make perfectly well sense in arbitrary derivators. It turns out that a derivator is pointed if and only if $C$ is a left adjoint if and only if $F$ is a right adjoint. In that case, there are adjunctions $C\dashv 1_!$ and $0_\ast\dashv F$, exhibiting $C$ and $F$ as (co)exceptional inverse image functors; see \cite[Prop.~3.22]{groth:ptstab}. 
\end{rmk}

\begin{rmk}
  In \cref{thm:stable-fun} we will characterize stable derivators with a simliar list of conditions, essentially by combining \cref{thm:stable-lim-III,thm:stab-op}.
  We could similarly have proven \cref{prop:char-ptd} by combining \cref{prop:ptd-comm,thm:stab-op}, but we chose instead to give a proof with a closer connection to previous literature.
\end{rmk}

Let \D be a derivator and let $1\colon\bbone\to[1]$ classify the terminal object $1\in[1]$. In every derivator \D there are Kan extension adjunctions $(1_!,1^\ast)\colon\D\rightleftarrows\D^{[1]}$
and $(1^\ast,1_\ast)\colon\D\rightleftarrows\D^{[1]}$, and we hence have an adjoint triple
\[
1_!\dashv 1^\ast\dashv 1_\ast.
\]
Similarly, associated to the functor $0\colon\bbone\to[1]$ there is the adjoint triple
\[
0_!\dashv 0^\ast\dashv 0_\ast.
\]

\begin{prop}\label{prop:univ-sieve}
Let \D be a derivator and let $0,1\colon\bbone\to[1]$ classify the objects $0,1\in[1]$.
\begin{enumerate}
\item The morphisms $0_!,1_\ast\colon\D\to\D^{[1]}$ are fully faithful and induce an equivalence onto the full subderivator spanned by the isomorphisms. This equivalence is pseudo-natural with respect to arbitrary morphisms of derivators.
\item There are canonical isomorphism $0_!\cong \pi_{[1]}^\ast\cong 1_\ast\colon\D\to\D^{[1]}$.
\end{enumerate}
\end{prop}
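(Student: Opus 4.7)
The plan is to prove (ii) first and then deduce (i) from it, by exploiting the double adjunction $0 \dashv \pi_{[1]} \dashv 1$ in $\cCat$. Here $0$ picks out the initial object of $[1]$ and $1$ picks out the terminal object, so these adjunctions are immediate at the level of ordinary categories. Applying the 2-functor $\D\colon\cCat\op\to\cCAT$ and using the standard 2-categorical fact that 2-functors send adjunctions to adjunctions, we obtain pseudo-natural adjunctions $0^\ast \dashv \pi_{[1]}^\ast \dashv 1^\ast$ for any derivator $\D$. Combined with the defining Kan extension adjunctions $0_! \dashv 0^\ast$ and $1^\ast \dashv 1_\ast$, uniqueness of adjoints up to canonical isomorphism yields $0_! \cong \pi_{[1]}^\ast$ and $\pi_{[1]}^\ast \cong 1_\ast$, proving (ii). Pseudo-naturality with respect to morphisms of derivators is then automatic because $\pi_{[1]}^\ast$ is a restriction morphism and every morphism of (pre)derivators strictly commutes with restrictions.

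For (i), full faithfulness of $0_!$ and $1_\ast$ follows from the general fact that left or right Kan extensions along a fully faithful functor are fully faithful \cite[Prop.~1.20]{groth:ptstab}, applied to the fully faithful inclusions $0,1\colon\bbone\to[1]$. To identify the essential image, I would use (ii) to rewrite $0_!$ as $\pi_{[1]}^\ast$. Given $X\in\D^{[1]}(A)=\D([1]\times A)$, the counit of $0_!\dashv 0^\ast$ is, under the identification $0_!\cong\pi_{[1]}^\ast$, the comparison $\pi_{[1]}^\ast 0^\ast X\to X$. Evaluated at $0$ via $(0\times\id_A)^\ast$ this is an identity; evaluated at $1$ via $(1\times\id_A)^\ast$ it is precisely the underlying morphism of $X$ in $\D(A)$. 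By the conservativity axiom (Der2) applied to the two objects of $[1]$, the counit is an isomorphism in $\D([1]\times A)$ if and only if the underlying morphism of $X$ is an isomorphism in $\D(A)$. Since $0_!$ is fully faithful, $X$ lies in the essential image of $0_!$ precisely when this counit is invertible, which characterizes the essential image as the full subderivator of isomorphisms. A symmetric argument using the unit of $1^\ast\dashv 1_\ast$ handles $1_\ast$.

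For the ``full subderivator'' aspect, I should check that restriction along arbitrary $u\colon A'\to A$ preserves the property of having iso underlying morphism. This is immediate because $u^\ast$ commutes with $(0\times\id)^\ast$ and $(1\times\id)^\ast$ up to canonical isomorphism, so it sends isomorphism-underlying diagrams to the same, making the collection of such diagrams into a genuine subprederivator. That this subprederivator is a derivator (admitting its own Kan extensions) is then forced by the equivalence with $\D$ provided by $0_!$ (equivalently $\pi_{[1]}^\ast$, equivalently $1_\ast$), whose inverse is $0^\ast\cong\pi_{[1], !}\cong 1^\ast$ restricted to this subderivator.

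The main potential obstacle is purely bookkeeping: making sure that the comparison isomorphisms $0_!\cong\pi_{[1]}^\ast\cong 1_\ast$ from (ii) are chosen coherently so that the counit of $0_!\dashv 0^\ast$ really becomes the restriction of the constant-diagram counit, and that ``pseudo-natural with respect to arbitrary morphisms of derivators'' is understood in the correct sense (namely that each morphism $F\colon\D\to\E$ commutes, up to canonical coherent isomorphism, with the equivalences on both sides). No part of the argument requires any nontrivial structure on $\D$ such as pointedness.
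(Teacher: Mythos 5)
Your overall strategy matches the paper's: prove (ii) via the adjoint triple $0\dashv\pi_{[1]}\dashv 1$ in $\cCat$, then handle (i) by combining full faithfulness of Kan extensions along fully faithful functors with an identification of the essential image, and observe that pseudo-naturality comes for free once $0_!$ and $1_\ast$ are identified with the restriction morphism $\pi_{[1]}^\ast$. (The paper cites~\cite[Prop.~3.12]{groth:ptstab} for the essential-image statement rather than verifying it by hand as you do, and cites~\cite[Prop.~5.7, Rmk.~6.11]{groth:can-can} for pseudo-naturality, but the underlying reasoning is the same.)

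However, there is a real sign error in your proof of (ii). Since $\D\colon\cCat\op\to\cCAT$ is \emph{contravariant} on 1-cells, applying it to an adjunction reverses the direction: $F\dashv G$ in $\cCat$ yields $G^\ast\dashv F^\ast$ in $\cCAT$, not $F^\ast\dashv G^\ast$. Hence $0\dashv\pi_{[1]}\dashv 1$ induces $1^\ast\dashv\pi_{[1]}^\ast\dashv 0^\ast$, not $0^\ast\dashv\pi_{[1]}^\ast\dashv 1^\ast$ as you wrote. This matters: from your stated (incorrect) triple $0^\ast\dashv\pi_{[1]}^\ast$ and $\pi_{[1]}^\ast\dashv 1^\ast$, uniqueness of adjoints would give $\pi_{[1]}^\ast\cong 0_\ast$ and $\pi_{[1]}^\ast\cong 1_!$, which is not what you claim (and not what is true). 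With the corrected triple $1^\ast\dashv\pi_{[1]}^\ast\dashv 0^\ast$, uniqueness of left adjoints to $0^\ast$ gives $0_!\cong\pi_{[1]}^\ast$, and uniqueness of right adjoints to $1^\ast$ gives $\pi_{[1]}^\ast\cong 1_\ast$, as desired. The rest of your argument for (i) is fine once this is fixed, since it only uses the isomorphisms $0_!\cong\pi_{[1]}^\ast\cong 1_\ast$, not the direction of the intermediate adjunctions.
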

\begin{proof}
Both morphisms $0_!$ and $1_\ast$ are fully faithful and the essential image consists precisely of the isomorphisms by \cite[Prop.~3.12]{groth:ptstab}. Since derivators are invariant under equivalences of prederivators, the subprederivator of isomorphisms is a derivator. The equivalence is pseudo-natural with respect to arbitrary morphisms since all morphisms preserve left Kan extensions along left adjoint functors (see \cite[Prop.~5.7]{groth:can-can} and \cite[Rmk.~6.11]{groth:can-can}). As for the second statement, there is an adjoint triple $0\dashv\pi_{[1]}\dashv 1$ and hence an induced adjoint triple $1^\ast\dashv \pi_{[1]}^\ast\dashv 0^\ast$. This yields canonical isomorphisms $1_\ast\cong\pi_{[1]}^\ast$ and $0_!\cong\pi_{[1]}^\ast$. 
\end{proof}

We refer to $\pi_{[1]}^\ast\colon\D\to\D^{[1]}$ as the \textbf{constant morphism morphism}.

\begin{cor}
In every derivator \D there is an adjoint $5$-tuple
\begin{equation}\label{eq:5tuple}
1_!\dashv 1^\ast\dashv \pi_{[1]}^\ast\dashv 0^\ast\dashv 0_\ast.
\end{equation}
\end{cor}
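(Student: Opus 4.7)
The plan is to assemble the five-fold adjunction by combining the two standard adjoint triples for the functors $0,1\colon\bbone\to[1]$ with the isomorphisms $0_!\cong\pi_{[1]}^\ast\cong 1_\ast$ supplied by \autoref{prop:univ-sieve}.

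First I would recall that in any derivator, every functor $u$ induces an adjoint triple $u_!\dashv u^\ast\dashv u_\ast$. Applied to $1\colon\bbone\to[1]$ this gives $1_!\dashv 1^\ast\dashv 1_\ast$, and applied to $0\colon\bbone\to[1]$ it gives $0_!\dashv 0^\ast\dashv 0_\ast$. These two triples account for the outer four adjunctions in the chain
\[
1_!\dashv 1^\ast\dashv \pi_{[1]}^\ast\dashv 0^\ast\dashv 0_\ast.
\]

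The only thing remaining is to bridge the two triples in the middle, i.e.\ to identify the right adjoint $1_\ast$ of $1^\ast$ with the left adjoint $0_!$ of $0^\ast$. But this is exactly the content of \autoref{prop:univ-sieve}(ii), which provides canonical isomorphisms $1_\ast\cong\pi_{[1]}^\ast\cong 0_!$. Substituting these into the two basic adjoint triples yields $1^\ast\dashv\pi_{[1]}^\ast$ and $\pi_{[1]}^\ast\dashv 0^\ast$, completing the adjoint 5-tuple.

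There is no real obstacle here; the corollary is essentially a bookkeeping exercise, the genuine content having already been done in \autoref{prop:univ-sieve}. If anything warrants care, it is merely confirming that the isomorphisms $0_!\cong\pi_{[1]}^\ast\cong 1_\ast$ transport the unit and counit of each adjunction coherently, which is immediate since an isomorphism of functors transports adjunction data along its inverse.
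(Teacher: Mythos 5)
Your proof is correct and follows precisely the paper's intended argument: the paper sets up the two Kan extension adjoint triples $1_!\dashv 1^\ast\dashv 1_\ast$ and $0_!\dashv 0^\ast\dashv 0_\ast$ just before \autoref{prop:univ-sieve}, and the corollary's proof is simply the observation that the isomorphisms $0_!\cong\pi_{[1]}^\ast\cong 1_\ast$ from that proposition splice the two triples into the stated 5-tuple. No gap; this matches the paper's one-line proof (``immediate from \autoref{prop:univ-sieve}'') with the details spelled out.
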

\begin{proof}
This is immediate from \autoref{prop:univ-sieve}.
\end{proof}

\begin{prop}
A derivator \D is pointed if and only if the adjoint $5$-tuple \eqref{eq:5tuple} extends to an adjoint $7$-tuple, which is then given by
\begin{equation}\label{eq:7tuple}
C\dashv 1_!\dashv 1^\ast\dashv \pi_{[1]}^\ast\dashv 0^\ast\dashv 0_\ast\dashv F.
\end{equation}
\end{prop}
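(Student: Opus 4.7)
The plan is to deduce both directions of the biconditional by combining \autoref{rmk:C-inverse} with \autoref{prop:char-ptd}; beyond these, the argument reduces to assembling existing adjunctions into a single chain.

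For the forward direction, assume \D is pointed. Then \autoref{rmk:C-inverse} supplies adjunctions $C \dashv 1_!$ and $0_\ast \dashv F$, where $C$ and $F$ denote the cone and fiber morphisms $\D^{[1]} \to \D$. Prepending $C \dashv 1_!$ and appending $0_\ast \dashv F$ to the $5$-tuple \eqref{eq:5tuple} immediately yields the $7$-tuple \eqref{eq:7tuple}.

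For the converse, assume the $5$-tuple extends to a $7$-tuple $L \dashv 1_! \dashv 1^\ast \dashv \pi_{[1]}^\ast \dashv 0^\ast \dashv 0_\ast \dashv R$ for some morphisms $L, R \colon \D^{[1]} \to \D$. In particular $1_! \colon \D \to \D^{[1]}$ is a right adjoint, so by \autoref{prop:char-ptd}\ref{item:p3} the derivator \D is pointed. (Equivalently, one could note that $0_\ast$ is a left adjoint and invoke \autoref{prop:char-ptd}\ref{item:p6}.) Applying \autoref{rmk:C-inverse} in the now-established pointed setting identifies the left adjoint of $1_!$ as $C$ and the right adjoint of $0_\ast$ as $F$; uniqueness of adjoints then forces $L = C$ and $R = F$, so the extension coincides with \eqref{eq:7tuple}.

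There is no genuine obstacle in this argument: \autoref{rmk:C-inverse} already packages the key content, namely that pointedness is equivalent to either $1_!$ having a left adjoint or $0_\ast$ having a right adjoint, together with the identification of the resulting adjoints as $C$ and $F$. The present proposition simply reorganizes these two adjunction pairs and the existing $5$-tuple into a single chain of seven mutually adjoint morphisms.
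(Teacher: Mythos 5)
Your proof is correct and follows the same route as the paper, which simply cites \autoref{prop:char-ptd} and \autoref{rmk:C-inverse} and leaves the assembly implicit; you have spelled out exactly what that assembly is.
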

\begin{proof}
This is immediate from \autoref{prop:char-ptd} and \autoref{rmk:C-inverse}.
\end{proof}

\begin{rmk}
While in any pointed derivator there is by \cite[Prop.~3.20]{groth:ptstab} an adjunction
\[
(\cof,\fib)\colon\D^{[1]}\rightleftarrows\D^{[1]},
\]
in pointed derivators the morphism $C$ is the sixth left adjoint of $F$. 
\end{rmk}

\begin{thm}\label{thm:stable-fun}
The following are equivalent for a pointed derivator $\D$.
\begin{enumerate}
\item The derivator $\D$ is stable.\label{item:sf1}
\item The cone morphism $C\colon\D^{[1]}\to\D$ is a right adjoint.\label{item:sf2}
\item For any homotopy finite category $A$, the colimit morphism $\colim : \D^A \to \D$ is a right adjoint.\label{item:sf3}
\item For any left homotopy finite functor $u:A\to B$, the left Kan extension morphism $u_!: \D^A \to \D^B$ is a right adjoint.\label{item:sf4}
\item The fiber morphism $F\colon\D^{[1]}\to\D$ is a left adjoint.\label{item:sf2a}
\item For any homotopy finite category $A$, the limit morphism $\lim : \D^A \to \D$ is a left adjoint.\label{item:sf3a}
\item For any right homotopy finite functor $u:A\to B$, the right Kan extension morphism $u_*: \D^A \to \D^B$ is a left adjoint.\label{item:sf4a}
\item The adjoint $7$-tuple \eqref{eq:7tuple} extends to a doubly-infinite chain of adjoint morphisms.\label{item:sf5}
\end{enumerate}
\end{thm}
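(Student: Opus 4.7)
My plan is to combine \cref{thm:stab-op} and \cref{thm:stable-lim-III} for the equivalences among (i)--(iv), dualize for (v)--(vii), and treat (viii) separately using the suspension--loop equivalence available in the stable case.

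For (i) $\Leftrightarrow$ (iv): I apply \cref{thm:stab-op} to the class $\Phi$ of left homotopy finite functors. By \cref{thm:stable-lim-III}\ref{item:sl3a}, $\D$ is stable iff it is left $\Phi$-stable for this $\Phi$, and by \cref{thm:stab-op} this is in turn equivalent to every $u_!$ with $u \in \Phi$ being a right adjoint. For (iv) $\Rightarrow$ (iii), I take $u = \pi_A \colon A \to \bbone$: whenever $A$ is homotopy finite, $\pi_A$ is left homotopy finite (its slice at the unique object is $A$ itself), so $\colim_A = (\pi_A)_!$ is a right adjoint. For (iii) $\Rightarrow$ (ii), I factor the cone morphism as $C \cong \colim_\ulcorner \circ j_*$, where $j \colon [1] \to \ulcorner$ is the sieve picking out the horizontal arrow: here $j_*$ is always a right adjoint (as a right Kan extension), and $\colim_\ulcorner = (\pi_\ulcorner)_!$ is a right adjoint by (iii) since $\ulcorner$ is homotopy finite, so $C$ is too. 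Finally, (ii) $\Rightarrow$ (i): any right adjoint preserves limits, so $C$ preserves homotopy finite limits, whence stability by \cref{thm:stable-lim-III}\ref{item:sl6}. The equivalence of (i), (v), (vi), (vii) follows by applying the same arguments to $\D\op$.

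For (viii): the implication (viii) $\Rightarrow$ (ii) is immediate since the doubly-infinite chain provides a left adjoint to $C$. For (i) $\Rightarrow$ (viii), I use \cref{prop:stable-known-mod}: in the stable case, $C \cong \Sigma F$ and $F \cong \Omega C$, and $\Sigma \dashv \Omega$ is an equivalence, so in particular $\Omega \dashv \Sigma$ as well on both $\D$ and $\D^{[1]}$. Composing $0_* \dashv F$ with $\Omega \dashv \Sigma$ yields the adjunction $0_* \Omega \dashv \Sigma F \cong C$, producing the missing left adjoint to $C$. Dually, via $F \cong \Omega C$, composing $C \dashv 1_!$ with $\Sigma \dashv \Omega$ yields $F \dashv 1_! \Sigma$, producing the missing right adjoint to $F$. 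An easy induction then extends the chain indefinitely in both directions: at each step the new morphism is a composite of a morphism from the prior chain with a power of $\Sigma$ or $\Omega$, and each such factor has adjoints on both sides, so the composite does too.

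I expect the main conceptual points to be spotting the factorization of $C$ used for (iii) $\Rightarrow$ (ii) and recognizing the ``periodic'' form $C \cong \Sigma F$ of the 7-tuple in the stable case, which is what allows the chain to extend via the self-equivalence $\Sigma \dashv \Omega$; beyond this, everything is routine bookkeeping.
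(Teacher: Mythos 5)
Your proof is correct and takes essentially the same route as the paper: combine \cref{thm:stable-lim-III} and \cref{thm:stab-op} for the cycle (i)$\Rightarrow$(iv)$\Rightarrow$(iii)$\Rightarrow$(ii)$\Rightarrow$(i), dualize, and extend the $7$-tuple using $\Sigma F \cong C$ and $F \cong \Omega C$ together with the fact that $\Sigma$ and $\Omega$ are mutually adjoint equivalences. The only difference is cosmetic: you spell out the factorization $C \cong \colim_\ulcorner\circ j_*$ and the composed adjunction $0_*\Omega \dashv \Sigma F \cong C$ a bit more explicitly than the paper's ``the outer morphisms match up to an equivalence.''
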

\begin{proof}
  Combining \cref{thm:stable-lim-III,thm:stab-op}, we see that~\ref{item:sf1} implies~\ref{item:sf4}, which clearly implies~\ref{item:sf3}, while~\ref{item:sf3} implies~\ref{item:sf2} since the cone is a composite of a right extension by zero with a pushout.
  And~\ref{item:sf2} implies~\ref{item:sf1} by \cref{thm:stable-lim-III}\ref{item:sl6}, since right adjoints preserve all limits, so the first four statements are equivalent.
  The equivalence of~\ref{item:sf1} with~\ref{item:sf2a}, \ref{item:sf3a}, and~\ref{item:sf4a} is dual.
  Evidently~\ref{item:sf5} implies~\ref{item:sf2}.
  And conversely, if \D is a stable derivator, then by \autoref{prop:stable-known-mod} there are natural isomorphisms
\[
\Sigma F\toiso C\qquad\text{and}\qquad F\toiso\Omega C.
\]
Since $\Sigma$ and $\Omega$ are equivalences in stable derivators (\autoref{thm:stable-known}), this shows that the outer morphisms in the adjoint $7$-tuple \eqref{eq:7tuple} match up to an equivalence. This implies that the adjoint $7$-tuple can be extended to a doubly-infinite chain of adjoint morphisms and that this chain has period six (in the obvious sense).
\end{proof}

We conclude by offering a first interpretation and visualization of this chain of morphisms.

\begin{rmk}
Let \D be a stable derivator. Then a few additional adjoint morphisms in the doubly-infinite sequence extending \eqref{eq:7tuple} are given by:
\[
\ldots\dashv\pi^\ast\Omega\dashv \Sigma 0^\ast\dashv 0_\ast\Omega\dashv C\dashv 1_!\dashv 1^\ast\dashv \pi^\ast\dashv 0^\ast\dashv 0_\ast\dashv F\dashv 1_!\Sigma\dashv \Omega 1^\ast\dashv \pi^\ast\Sigma\dashv\ldots
\]
In fact, this is immediate from the proof of \autoref{thm:stable-fun}.

In order to not get lost in all these morphisms, let us recall that Barratt--Puppe sequences in a stable derivator \D can be thought of as refinements of the more classical distinguished triangles. More precisely, associated to $(f\colon x\to y)\in\D^{[1]}$ there is the Barratt--Puppe sequence $BP(f)$ generated by $f$. This is a coherent diagram as in \autoref{fig:BP-sequence-stable} which vanishes on the boundary stripes and which makes all squares bicartesian.
\begin{figure}
\begin{equation}
\vcenter{
\xymatrix@-1pc{
\ar@{}[dr]|{\ddots}&\ar@{}[dr]|{\ddots}&\ar@{}[dr]|{\ddots}&&&&&&\\
\ar@{}[dr]|{\ddots}&\Omega Ff\ar[r]\ar[d]\pullbackcorner&\Omega x\ar[r]\ar[d]\pullbackcorner&0\ar[d]&&&&&\\
&0\ar[r]&\Omega y\ar[r]\ar[d]\pushoutcorner\pullbackcorner&Ff\ar[r]\ar[d]\pushoutcorner\pullbackcorner&0\ar[d]&&&&\\
&&0\ar[r]&x\ar[r]^-f\ar[d]\pushoutcorner\pullbackcorner&y\ar[r]\ar[d]\pushoutcorner\pullbackcorner&0\ar[d]&&&\\
&&&0\ar[r]&Cf\ar[r]\ar[d]\pushoutcorner\pullbackcorner&\Sigma x\ar[r]\ar[d]\pushoutcorner\pullbackcorner&0\ar[d]\ar@{}[rd]|{\ddots}&&\\
&&&&0\ar[r]\ar@{}[dr]|{\ddots}&\Sigma y\ar[r]\ar@{}[rd]|{\ddots}\pushoutcorner&\Sigma Cf\ar@{}[dr]|{\ddots}\pushoutcorner&\\
&&&&&&&&
}
}
\end{equation}
\caption{The Barratt--Puppe sequence of $f$}
\label{fig:BP-sequence-stable}
\end{figure}
(It turns out that $BP$ defines an equivalence of derivators (see \cite[Thm.~4.5]{gst:Dynkin-A}).)

Now, one half of the morphisms in the doubly-infinite chain simply amount to traveling in the Barratt--Puppe sequence in \autoref{fig:BP-sequence-stable}. If we imagine to sit on the morphism $f$ in $BP(f)$, then for every $n\geq 1$ an application of the $(2n\text{-}1)$-th left adjoint of $\pi^\ast$ to $f$ amounts to traveling $n$ steps in the positive direction. For low values this yields $y,Cf,\Sigma x,\Sigma y,$ and so on. There is a similar interpretation of the iterated right adjoints to $\pi^\ast$.

In order to obtain a similar visualization of the remaining adjoints, let us consider the Barratt--Puppe sequence $BP(\pi_{[1]}^\ast x), x\in\D$, of a constant morphism which then looks like \autoref{fig:BP-constant}.
\begin{figure}
\begin{equation}
\vcenter{
\xymatrix@-1pc{
\ar@{}[dr]|{\ddots}&\ar@{}[dr]|{\ddots}&\ar@{}[dr]|{\ddots}&&&&&&\\
\ar@{}[dr]|{\ddots}&0\ar[r]\ar[d]\pullbackcorner&\Omega x\ar[r]\ar[d]\pullbackcorner&0\ar[d]&&&&&\\
&0\ar[r]&\Omega x\ar[r]\ar[d]\pushoutcorner\pullbackcorner&0\ar[r]\ar[d]\pushoutcorner\pullbackcorner&0\ar[d]&&&&\\
&&0\ar[r]&x\ar[r]^-\id\ar[d]\pushoutcorner\pullbackcorner&x\ar[r]\ar[d]\pushoutcorner\pullbackcorner&0\ar[d]&&&\\
&&&0\ar[r]&0\ar[r]\ar[d]\pushoutcorner\pullbackcorner&\Sigma x\ar[r]\ar[d]\pushoutcorner\pullbackcorner&0\ar[d]\ar@{}[rd]|{\ddots}&&\\
&&&&0\ar[r]\ar@{}[dr]|{\ddots}&\Sigma x\ar[r]\ar@{}[rd]|{\ddots}\pushoutcorner&0\ar@{}[dr]|{\ddots}\pushoutcorner&\\
&&&&&&&&
}
}
\end{equation}
\caption{The Barratt--Puppe sequence of $\pi_{[1]}^\ast x$}
\label{fig:BP-constant}
\end{figure}
While $\pi^\ast$ points at the constant morphism in the middle of \autoref{fig:BP-constant}, for every $n$ the remaining $2n$-th adjoints to $\pi^\ast$ classify suitable iterated rotations of this morphism.
\end{rmk}

\bibliographystyle{alpha}
\bibliography{stability}

\end{document}